\theoremstyle{plain}
\newtheorem{thm}{Theorem}[section]
\newtheorem{lem}[thm]{Lemma}
\theoremstyle{definition}
\newtheorem{ex}[thm]{Example}
\newtheorem{rem}[thm]{Remark}
\numberwithin{equation}{section}
\begin{document}
\title[Hammerstein systems with first derivative dependence]{Nontrivial solutions of systems of Hammerstein integral equations with first derivative dependence}
\author[G. Infante]{Gennaro Infante}
\address{Dipartimento di Matematica e Informatica, Universit\`{a} della
Calabria, 87036 Arcavacata di Rende, Cosenza, Italy}
\email{gennaro.infante@unical.it}
\author[F. Minh\'{o}s]{Feliz Minh\'{o}s}
\address{Departamento de Matem\'{a}tica, Escola de Ci\^{e}ncias e Tecnologia%
\\
Centro de Investiga\c{c}\~{a}o em Matem\'{a}tica e Aplica\c{c}\~{o}es (CIMA)%
\\
\ Instituto de Investiga\c{c}\~{a}o e Forma\c{c}\~{a}o Avan\c{c}ada \\
\ Universidade de \'{E}vora. Rua Rom\~{a}o Ramalho, 59 \\
\ 7000-671 \'{E}vora, Portugal}
\email{fminhos@uevora.pt}
\subjclass[2010]{Primary 45G15, secondary 34B10, 34B18, 47H30}
\keywords{Nontrivial solutions, derivative dependence, fixed point index, cone.}

\begin{abstract}
By means of classical fixed point index, we prove new results on the existence, non-existence, localization and multiplicity of nontrivial solutions for systems of Hammerstein integral equations where the nonlinearities are allowed to depend on the first derivative. As a byproduct of our theory we discuss the existence of positive solutions of a system of third order ODEs subject to nonlocal boundary conditions.
Some examples are provided in order to illustrate the applicability of the theoretical results.
\end{abstract}

\maketitle

\section{Introduction}
Motivated by earlier work of do {\'{O}}, Lorca and Ubilla~\cite{dolo} on radial solutions of elliptic systems, Infante and Pietramala~\cite{gipp-nodea} studied the existence, multiplicity and non-existence of \emph{nontrivial} solutions of systems of Hammerstein integral equations of the type
\begin{equation*}\label{ip-sys}
 \left\{
\begin{array}{c}
u(t)=\int_{0}^{1}k_{1}(t,s)g_{1}(s)f_{1}(s, u(s), v(s))\,ds, \\ 
v(t)=\int_{0}^{1}k_{2}(t,s)g_{2}(s)f_{2}(s, u(s), v(s))\,ds.%
\end{array}
 \right.
\end{equation*}%
The methodology of~\cite{gipp-nodea} is based on classical fixed point index theory and the authors work in a suitable cone in $\left( C[0,1]\right) ^{2}$. Due to the choice of the space involved, the setting of~\cite{gipp-nodea} does not allow derivative dependence in the nonlinearities. 

On the other hand, Minh\'{o}s and de Sousa~\cite{fm+rs} studied the system of third order ordinary differential equations subject to nonlocal boundary conditions
\begin{equation}\label{FelRob}
\left\{ 
\begin{array}{c}
-u^{\prime \prime \prime }(t)=f_{1}(t, v(t), v^{\prime }(t)), \\ 
-v^{\prime \prime \prime }(t)=f_{2}(t, u(t), u^{\prime }(t)), \\ 
u(0)=u^{\prime }(0)=0,u^{\prime }(1)=\alpha  u^{\prime }(\eta ), \\ 
v(0)=v^{\prime }(0)=0,v^{\prime }(1)=\alpha  v^{\prime }(\eta ),%
\end{array}%
\right.  
\end{equation}%
where $0<\eta<1$ and $1<\alpha<{1}/{\eta}$.
The approach of~\cite{fm+rs} relies on the celebrated Krasnosel'ski\u\i{}-Guo fixed point theorem and on the rewriting the system~\eqref{FelRob} in the form
\begin{equation}\label{FelRob-intsys}
 \left\{
\begin{array}{c}
u(t)=\int_{0}^{1}k(t,s)f_{1}(s, v(s),v^{\prime }(s))\,ds, \\ 
v(t)=\int_{0}^{1}k(t,s)f_{2}(s,u(s),u^{\prime
}(s))\,ds.%
\end{array}
 \right.
\end{equation}%
Minh\'{o}s and de Sousa proved the existence of \emph{one} positive solution of the system~\eqref{FelRob-intsys}, by assuming suitable superlinear/sublinear behaviours of the nonlinearities. A key ingredient in~\cite{fm+rs} is the use of the cone 
\begin{equation}\label{FelRob-cone}
\hat{K}:=\left\{ w\in C^{1}[0,\,1]: w(t)\geq 0, \underset{t\in [ \frac{\eta}{\alpha},\eta]}%
{\min }w(t)\geq c\Vert w\Vert _{C},\underset{t\in [\frac{\eta}{\alpha},\eta]}{\min }w^{\prime }(t)\geq d\Vert w^{\prime }\Vert
_{C}\right\} ,  
\end{equation}%
where $c, d\in (0,1]$ and
$\Vert w\Vert _{C}:=\underset{t\in \lbrack 0,\,1]\,%
}{\max }|w(t)|$. The cone~\eqref{FelRob-cone} is similar to 
a cone of \emph{non-negative} functions first used by Krasnosel'ski\u\i{}, see e.g. \cite{krzab}, and D.~Guo, see e.g. \cite{guolak} in the space $C[0,\,1]$. Note that the functions in~\eqref{FelRob-cone} are non-negative and their derivatives are non-negative on a subset of $[0,1]$.

Here we make use of a new cone of functions that are allowed to \emph{change sign}, similar to one introduced, in the space of  continuous functions, by Infante and Webb~\cite{gijwjiea}. With this ingredient we prove existence, multiplicity and non-existence results for \emph{nontrivial} solutions of the systems of integral equations of the kind
\begin{equation*}\label{syst-intro}
 \left\{
\begin{array}{c}
u(t)=\int_{0}^{1}k_{1}(t,s)g_{1}(s)f_{1}(s,u(s),u^{\prime
}(s),v(s),v^{\prime }(s))\,ds, \\ 
v(t)=\int_{0}^{1}k_{2}(t,s)g_{2}(s)f_{2}(s,u(s),u^{\prime
}(s),v(s),v^{\prime }(s))\,ds,%
\end{array}
 \right.
\end{equation*}%
extending the results of~\cite{gipp-nodea} to this different setting. 

We note that our approach can be also used to prove the existence of \emph{non-negative} solutions; we highlight this fact by considering a generalization of the system~\eqref{FelRob}, that is
\begin{equation}\label{bvp-ex-intro}
\left\{ 
\begin{array}{c}
-u^{\prime \prime \prime }(t)=g_{1}(t)f_{1}(t,\,u(t),\,u^{\prime
}(t),v(t),\,v^{\prime }(t)), \\ 
-v^{\prime \prime \prime }(t)=g_{2}(t)f_{2}(t,\,u(t),\,u^{\prime
}(t),v(t),\,v^{\prime }(t)), \\ 
u(0)=u^{\prime }(0)=0,u^{\prime }(1)=\alpha_{1}  u^{\prime }(\eta_{1} ), \\ 
v(0)=v^{\prime }(0)=0,v^{\prime }(1)=\alpha_{2} v^{\prime }(\eta_{2} ),%
\end{array}%
\right.  
\end{equation}%
where $0<\eta_{i} <1$,
$1<\alpha_{i}  <\frac{1}{\eta_{i} }$. Note that the boundary conditions in~\eqref{bvp-ex-intro} can generate two different kernels and the nonlinearities are allowed to have a stronger coupling with respect to the ones present in~\eqref{FelRob}.

Some examples are given to show that the constants that occur in our theoretical results can be computed.

\section{The system of integral equations}
We begin by stating some assumptions on the terms that occur in
the system of Hammerstein integral equations
\begin{equation}
 \left\{
\begin{array}{c}
u(t)=\int_{0}^{1}k_{1}(t,s)g_{1}(s)f_{1}(s,u(s),u^{\prime
}(s),v(s),v^{\prime }(s))\,ds, \\ 
v(t)=\int_{0}^{1}k_{2}(t,s)g_{2}(s)f_{2}(s,u(s),u^{\prime
}(s),v(s),v^{\prime }(s))\,ds,%
\end{array}
\label{syst}
 \right.
\end{equation}%
namely:
\begin{itemize}
\item[$(A1)$] For $i=1,2$, $f_{i}:[0,1]\times 
\mathbb{R}
^{4}\rightarrow \lbrack 0, +\infty )$ is a $L^{\infty }$-Carath\'{e}odory
function, that is, $f_{i}(\cdot ,u_{1},u_{2},v_{1},v_{2})$ is measurable
for each fixed $(u_{1},u_{2},v_{1},v_{2}),$ $f_{i}(t,\cdot ,\cdot ,\cdot
,\cdot )$ is continuous for almost every (a.e.) $t\in \lbrack 0,1]$, and
for each $r>0$ there exists $\varphi _{i,r}\in L^{\infty }[0,1]$ such that{} 
\begin{equation*}
f_{i}(t,u_{1},u_{2},v_{1},v_{2})\leq \varphi _{i,r}(t)\;\text{ for }%
\;u_{1},u_{2},v_{1},v_{2}\in \lbrack -r,r]\;\text{ and a.\thinspace e.}%
\;t\in \lbrack 0,1].
\end{equation*}%
{}

\item[$(A2)$] For every $i=1,2$, $k_{i}:[0,1]^{2}\rightarrow 
\mathbb{R}
$ is such that $k_{i} $ are measurable, and for
all $\tau \in \lbrack 0,1]$ we have 
\begin{equation*}
\lim_{t\rightarrow \tau }|k_{i}(t,s)-k_{i}(\tau ,s)|=0,\;\text{ for
a.\thinspace e.}\,s\in \lbrack 0,1]
\end{equation*}%
and%
\begin{equation*}
\lim_{t\rightarrow \tau }\left\vert \frac{\partial k_{i}}{\partial t}(t,s)-%
\frac{\partial k_{i}}{\partial t}(\tau ,s)\right\vert =0,\;\text{ for
a.\thinspace e.}\,s\in \lbrack 0,1].
\end{equation*}%
{}

\item[$(A3)$] For every $i=1,2$, there exist subintervals $[a_{i},b_{i}],[%
\gamma _{i},\delta_{i}]\subseteq \lbrack 0,1]$, functions $\phi
_{i},\psi _{i}\in L^{\infty }[0,1]$, and constants $c_{i},d_{i}\in (0,1]$,
such that%
\begin{eqnarray*}
\left\vert k_{i}(t,s)\right\vert &\leq &\phi _{i}(s)\text{ for }t\in \lbrack
0,1]\text{ and a.\thinspace e.}\,s\in \lbrack 0,1], \\
\left\vert \frac{\partial k_{i}}{\partial t}(t,s)\right\vert &\leq &\psi
_{i}(s)\text{ for }t\in \lbrack 0,1]\text{ and a.\thinspace e.}\,s\in
\lbrack 0,1] \\
k_{i}(t,s) &\geq &c_{i}\phi _{i}(s)\text{ for }t\in \lbrack a_{i},b_{i}]%
\text{ and a.\thinspace e.}\,s\in \lbrack 0,1]. \\
\frac{\partial k_{i}}{\partial t}(t,s) &\geq &d_{i}\psi _{i}(s)\text{ for }%
t\in \lbrack \gamma _{i},\delta_{i}]\text{ and a.\thinspace e.}\,s\in
\lbrack 0,1].
\end{eqnarray*}%
{}

\item[$(A4)$] For every $i=1,2$, we have $g_{i}\in
L^{1}[0,1]$, $g_{i}(t)\geq 0$ a.e. $t\in \lbrack 0,1]$, 
$\int_{a_{i}}^{b_{i}}\phi _{i}(s)g_{i}(s)\,ds>0$ and $\int_{\gamma
_{i}}^{\delta_{i}}\psi _{i}(s)g_{i}(s)\,ds>0$. {}
\end{itemize}

Forward in the paper we use the space $\left( C^{1}[0,1]\right) ^{2}$ equipped with
the norm 
\begin{equation*}
\Vert (u,v)\Vert :=\max \{\Vert u\Vert _{C^{1}},\Vert v\Vert _{C^{1}}\},
\end{equation*}%
where $\Vert w\Vert _{C^{1}}:=\max \left\{ \Vert w\Vert _{C},\Vert w^{\prime
}\Vert _{C}\right\}$.

For the reader's convenience, we recall that a \emph{cone} $K$ in a Banach
space $X$ is a closed convex set such that $\lambda \,x\in K$ for $x\in K$
and $\lambda \geq 0$ and $K\cap (-K)=\{0\}$.

Consider, in the space $C^{1}[0,\,1]$, the cones
\begin{equation}
\tilde{K_{i}}:=\left\{ w\in C^{1}[0,\,1]:\underset{t\in \lbrack a_{i},b_{i}]}%
{\min }w(t)\geq c_{i}\Vert w\Vert _{C},\underset{t\in \lbrack \gamma_{i}, \delta _{i}]}{\min }w^{\prime }(t)\geq d_{i}\Vert w^{\prime }\Vert
_{C}\right\} ,  \label{Ktil}
\end{equation}%
and their product in $\left( C^{1}[0,1]\right) ^{2}$ defined by 
\begin{equation}\label{cone-sign}
\begin{array}{c}
K:=\{(u,v)\in \tilde{K_{1}}\times \tilde{K_{2}}\}.%
\end{array}%
\end{equation}

By a \emph{nontrivial} solution of the system (\ref{syst}) we mean a
solution $(u,v)\in K$ of (\ref{syst}) such that $\Vert (u,v)\Vert \neq 0$.
Note that the functions in $\tilde{K_{i}}$ are non-negative on the sub-intervals 
$[a_{i},b_{i}]$ and non-decreasing on $[\gamma _{i},\delta_{i}]$ but,
nevertheless, they can change sign or have a different variation in $[0,1]$.

We define the integral operator 
\begin{equation}
\begin{array}{c}
T(u,v)(t):=\left( 
\begin{array}{c}
T_{1}(u,v)(t) \\ 
T_{2}(u,v)(t)%
\end{array}%
\right) 
=\left( 
\begin{array}{c}
\int_{0}^{1}k_{1}(t,s)g_{1}(s)f_{1}(s,u(s),u^{\prime }(s),v(s),v^{\prime
}(s))\,ds \\ 
\int_{0}^{1}k_{2}(t,s)g_{2}(s)f_{2}(s,u(s),u^{\prime }(s),v(s),v^{\prime
}(s))\,ds%
\end{array}%
\right),%
\end{array}
\label{opT}
\end{equation}%
and prove that $T$ leaves the cone $K$ invariant and is compact.

\begin{lem}
The operator $T$ given by \eqref{opT} maps $K$ into $K$ and
is compact.
\end{lem}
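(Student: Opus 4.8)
The plan is to verify the cone invariance $T(K)\subseteq K$ first and then establish compactness via an Arzel\`a--Ascoli argument in $C^1[0,1]$. Throughout, I would fix $(u,v)\in K$ and abbreviate $F_i(s):=g_i(s)f_i(s,u(s),u'(s),v(s),v'(s))$, which is non-negative for a.e.\ $s$ by $(A1)$ and $(A4)$, and which lies in $L^1[0,1]$ since $F_i\le g_i\varphi_{i,r}$ for $r=\|(u,v)\|$ by $(A1)$ and $g_i\in L^1$.

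For the invariance, the point is that the constants $c_i,d_i$ in $(A3)$ are tailored exactly to the two inequalities defining $\tilde K_i$. First I would bound $\|T_i(u,v)\|_C$ from above: since $|k_i(t,s)|\le\phi_i(s)$, one has $|T_i(u,v)(t)|\le\int_0^1\phi_i(s)F_i(s)\,ds$ for all $t$, whence $\|T_i(u,v)\|_C\le\int_0^1\phi_i(s)F_i(s)\,ds$. Then, using $k_i(t,s)\ge c_i\phi_i(s)$ on $[a_i,b_i]$ together with $F_i\ge 0$, I obtain $T_i(u,v)(t)\ge c_i\int_0^1\phi_i(s)F_i(s)\,ds\ge c_i\|T_i(u,v)\|_C$ for $t\in[a_i,b_i]$, which is the first defining inequality of $\tilde K_i$.

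The derivative condition requires first justifying that one may differentiate under the integral sign, namely $(T_i(u,v))'(t)=\int_0^1\frac{\partial k_i}{\partial t}(t,s)F_i(s)\,ds$. This is where I expect the only genuine technical care to be needed: the bound $|\partial k_i/\partial t(t,s)|\le\psi_i(s)$ from $(A3)$, together with $\psi_i\in L^\infty[0,1]$ and $F_i\in L^1[0,1]$, supplies an integrable dominating function, so the differentiation is legitimate. With this representation in hand, the same two-step estimate as above---now using $|\partial k_i/\partial t|\le\psi_i$ for the upper bound on $\|(T_i(u,v))'\|_C$ and $\partial k_i/\partial t\ge d_i\psi_i$ on $[\gamma_i,\delta_i]$ for the lower bound---yields $(T_i(u,v))'(t)\ge d_i\|(T_i(u,v))'\|_C$ on $[\gamma_i,\delta_i]$. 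This gives $T_i(u,v)\in\tilde K_i$ for $i=1,2$, so $T(u,v)\in K$.

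For compactness I would fix $r>0$ and work on the bounded set $\{(u,v)\in K:\|(u,v)\|\le r\}$, on which $|u|,|u'|,|v|,|v'|\le r$ pointwise, so $F_i(s)\le g_i(s)\varphi_{i,r}(s)=:G_{i,r}(s)\in L^1[0,1]$ by $(A1)$ and $(A4)$. The uniform bounds $\|T_i(u,v)\|_C\le\int_0^1\phi_i G_{i,r}$ and $\|(T_i(u,v))'\|_C\le\int_0^1\psi_i G_{i,r}$ give uniform boundedness in $C^1$. For equicontinuity I would estimate, as $t\to\tau$, the difference $|T_i(u,v)(t)-T_i(u,v)(\tau)|\le\int_0^1|k_i(t,s)-k_i(\tau,s)|G_{i,r}(s)\,ds$, and the analogous expression for the derivative with $\partial k_i/\partial t$; the integrands tend to $0$ for a.e.\ $s$ by $(A2)$ and are dominated by $2\phi_i G_{i,r}$, respectively $2\psi_i G_{i,r}$, so dominated convergence forces both differences to $0$ uniformly in $(u,v)$. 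The Arzel\`a--Ascoli theorem, applied simultaneously to $T_i(u,v)$ and $(T_i(u,v))'$, then shows that $T$ maps bounded subsets of $K$ into relatively compact subsets of $\left(C^1[0,1]\right)^2$. Finally, continuity of $T$ follows from the Carath\'eodory hypothesis in $(A1)$ via the same dominated-convergence argument, and a continuous map carrying bounded sets to relatively compact sets is compact.
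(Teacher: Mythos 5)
Your proof is correct and follows essentially the same route as the paper: the identical two-step estimates (upper bound via $\phi_i$, respectively $\psi_i$, then lower bound via $c_i\phi_i$ on $[a_i,b_i]$, respectively $d_i\psi_i$ on $[\gamma_i,\delta_i]$) give the cone invariance, and compactness is obtained through the Arzel\`a--Ascoli theorem. The only difference is that you spell out the steps the paper labels as routine --- the justification of differentiation under the integral sign and the uniform-boundedness/equicontinuity/continuity details via $(A1)$, $(A2)$ and dominated convergence --- which is a welcome, but not substantively different, elaboration.
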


\begin{proof}
Take $(u,v)\in K.$ Then, by (A3), 
\begin{equation*}
\Vert T_{1}(u,v)\Vert _{C}\leq \int_{0}^{1}\phi
_{1}(s)g_{1}(s)f_{1}(s,u(s),u^{\prime }(s),v(s),v^{\prime }(s))\,ds,
\end{equation*}%
and%
\begin{align*}
\min_{t\in \lbrack a_{1},b_{1}]}T_{1}(u,v)(t) 
&= \min_{t\in \lbrack a_{1},b_{1}]} \int_{0}^{1}k_{1}(t,s)g_{1}(s)f_{1}(s,u(s),u^{\prime }(s),v(s),v^{\prime }(s))\,ds \\
&\geq c_{1}\int_{0}^{1}\phi
_{1}(s)g_{1}(s)f_{1}(s,u(s),u^{\prime }(s),v(s),v^{\prime }(s))\,ds \\
&\geq c_{1}\Vert T_{1}(u,v)\Vert _{C}.
\end{align*}%
Moreover%
\begin{equation*}
\Vert \left( T_{1}(u,v)\right) ^{\prime }\Vert _{C}\leq \int_{0}^{1}\psi
_{1}(s)g_{1}(s)f_{1}(s,u(s),u^{\prime }(s),v(s),v^{\prime }(s))\,ds,
\end{equation*}%
and%
\begin{eqnarray*}
\min_{t\in \lbrack \gamma _{1},\delta_{1}]}\left( T_{1}(u,v)(t)\right)
^{\prime } &=&\min_{t\in \lbrack \gamma _{1},\delta_{1}]}\int_{0}^{1}\frac{%
\partial k_{1}}{\partial t}(t,s)g_{1}(s)f_{1}(s,u(s),u^{\prime
}(s),v(s),v^{\prime }(s))\,ds \\
&\geq &d_{1}\int_{0}^{1}\psi _{1}(s)g_{1}(s)f_{1}(s,u(s),u^{\prime
}(s),v(s),v^{\prime }(s))\,ds \\
&\geq &d_{1}\Vert \left( T_{1}(u,v)\right) ^{\prime }\Vert _{C}.
\end{eqnarray*}%
Therefore $T_{1}\tilde{K_{1}}\subset \tilde{K_{1}}$. By similar arguments it can be proved that $T_{2}\tilde{K_{2}}\subset \tilde{%
K_{2}}$. 

The compactness of $T$ follows, in a routine way, by the Ascoli-Arzel\`{a} Theorem.
\end{proof}

To specify our notation, for $\Omega $ an open bounded subset with $\Omega
\subset K$ (endowed with the relative topology), we denote by $\overline{%
\Omega }$ and $\partial \Omega $ the closure and the boundary relative to $%
K, $ respectively. If $\Omega $ is an open bounded subset of $X$ then we
write $\Omega _{K}=\Omega \cap K$, an open subset of $K$.

The next Lemma summarizes some classical results on fixed point index (more
details can be seen in the books~\cite{Amann-rev, guolak}).

\begin{lem}
\label{index_lem}Let $\Omega $ be an open bounded set with $0\in \Omega _{K}$
and $\overline{\Omega }_{K}\neq K$. Assume that $F:\overline{\Omega }%
_{K}\rightarrow K$ is a compact map such that $x\neq Fx$ for all $x\in
\partial \Omega _{K}$. Then the fixed point index $i_{K}(F,\Omega _{K})$ has
the following properties.

\begin{itemize}
\item[(1)] If there exists $e\in K\setminus \{0\}$ such that $x\neq
Fx+\lambda e$ for all $x\in \partial \Omega _{K}$ and all $\lambda >0$, then 
$i_{K}(F,\Omega _{K})=0$.

\item[(2)] If $\mu x\neq Fx$ for all $x\in \partial \Omega _{K}$ and for
every $\mu \geq 1$, then $i_{K}(F,\Omega _{K})=1$.

\item[(3)] If $i_{K}(F,\Omega _{K})\neq 0$, then $F$ has a fixed point in $
\Omega _{K}$.

\item[(4)] Let $\Omega ^{1}$ be open in $X$ with $ \overline{
\Omega _{K}^{1}}
\subset \Omega _{K}$. If $i_{K}(F,\Omega _{K})=1$ and $i_{K}(F,\Omega
_{K}^{1})=0$, then $F$ has a fixed point in $\Omega _{K}\setminus \overline{%
\Omega _{K}^{1}}$. The same result holds if $i_{K}(F,\Omega _{K})=0$ and $%
i_{K}(F,\Omega _{K}^{1})=1$.
\end{itemize}
\end{lem}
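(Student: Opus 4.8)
The plan is to deduce all four statements from the fundamental properties of the fixed point index for compact maps on a cone---normalization, homotopy invariance, additivity and the solution property---whose construction, via a retraction of the ambient space onto $K$ followed by the Leray--Schauder degree, is carried out precisely in the cited references~\cite{Amann-rev, guolak}. Once these axioms are granted, the task reduces to short, admissible homotopy arguments, the only recurring technicality being that each homotopy must stay valued in $K$; this is automatic because $K$ is a convex cone (hence closed under addition and non-negative scaling) and $F$ maps $\overline{\Omega}_K$ into $K$. Statement (3) requires no separate argument: it is nothing but the solution property of the index.

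For (2), I would use the homotopy $h(\lambda,x)=\lambda Fx$ with $\lambda\in[0,1]$, which takes values in $K$ since $Fx\in K$. A boundary fixed point with $\lambda\in(0,1]$ would give $Fx=(1/\lambda)x=\mu x$ with $\mu=1/\lambda\geq 1$, contradicting the hypothesis, while for $\lambda=0$ one would need $x=0\notin\partial\Omega_K$ (as $0\in\Omega_K$). Homotopy invariance then yields $i_K(F,\Omega_K)=i_K(0,\Omega_K)=1$ by normalization. For (1), I would instead perturb in the direction of $e$ through $h(\lambda,x)=Fx+\lambda e$ for $\lambda\geq 0$, which lies in $K$ because $Fx\in K$, $e\in K$, and $K$ is a convex cone. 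Since $F$ is compact and $\overline{\Omega}_K$ is bounded, $F(\overline{\Omega}_K)$ is bounded, so for $\lambda$ large enough the map $x\mapsto Fx+\lambda e$ has no fixed point in $\overline{\Omega}_K$ at all, forcing its index to be $0$; the hypothesis $x\neq Fx+\lambda e$ on $\partial\Omega_K$ for every $\lambda>0$ keeps the homotopy admissible, so homotopy invariance gives $i_K(F,\Omega_K)=0$.

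Statement (4) then follows from additivity (excision). Since $\overline{\Omega_K^{1}}\subset\Omega_K$ and $F$ has no fixed points on $\partial\Omega_K\cup\partial\Omega_K^{1}$, additivity gives $i_K(F,\Omega_K\setminus\overline{\Omega_K^{1}})=i_K(F,\Omega_K)-i_K(F,\Omega_K^{1})$, which equals $1-0=1$ in the first case and $0-1=-1$ in the second; either way it is nonzero, and statement (3) produces a fixed point in $\Omega_K\setminus\overline{\Omega_K^{1}}$.

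The genuinely hard part is not in any of these deductions but in the construction of the index for cone maps together with the verification that it satisfies normalization, homotopy invariance, additivity and the solution property; this is exactly the material imported from~\cite{Amann-rev, guolak}, which I would cite rather than reprove. The only subtlety in the arguments themselves is the admissibility of each homotopy---that is, checking the absence of boundary fixed points for every parameter value---which is precisely where each of the four hypotheses is used.
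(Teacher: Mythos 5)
Your proposal is correct: the homotopies $\lambda Fx$ and $Fx+\lambda e$ (the latter admissible for large $\lambda$ because $F(\overline{\Omega}_K)$ and $\Omega$ are bounded), together with additivity for (4) and the solution property for (3), are precisely the classical arguments, and you handle the needed admissibility checks, including $\lambda=0$ via $0\in\Omega_K$ and the standing hypothesis $x\neq Fx$ on $\partial\Omega_K$. Note that the paper offers no proof of this lemma at all --- it is stated as a summary of classical results with a pointer to~\cite{Amann-rev, guolak} --- so your write-up simply makes explicit the standard derivation contained in those references.
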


Along the paper, we use the following (relative) open bounded sets in $K$: 
\begin{equation}
K_{\rho _{1},\rho _{2}}=\{(u,v)\in K:\Vert u\Vert _{C^{1}}<\rho _{1}\ \text{%
and}\ \Vert v\Vert _{C^{1}}<\rho _{2}\},  \label{Krho12}
\end{equation}%

For our index calculations we make use of the following Lemma, similar to
Lemma $5$ of~\cite{df-gi-do}. The novelty here is that we take into account
the derivative. We omit the simple proof. 

\begin{lem}
For the set defined by (\ref{Krho12}) we have that $%
(w_{1},w_{2})\in \partial K_{\rho _{1},\rho _{2}}$ \ iff \ $(w_{1},w_{2})\in
K$ and, for $i=1,2,$ 
\begin{equation*}
\max_{t\in \lbrack 0,1]}w_{1}(t)=\rho _{1},\, -\rho _{1}\leq
w_{1}^{\prime }(t)\leq \rho _{1},\, -\rho _{2}\leq w_{2}(t)\leq \rho
_{2},\, -\rho _{2}\leq w_{2}^{\prime }(t)\leq \rho _{2},
\end{equation*}%
or%
\begin{equation*}
-\rho _{1}\leq w_{1}(t)\leq \rho _{1},\, \max_{t\in \lbrack
0,1]}w_{1}^{\prime }(t)=\rho _{1},\, -\rho _{2}\leq w_{2}(t)\leq \rho
_{2},\, -\rho _{2}\leq w_{2}^{\prime }(t)\leq \rho _{2},
\end{equation*}%
or%
\begin{equation*}
-\rho _{1}\leq w_{1}(t)\leq \rho _{1},\, -\rho _{1}\leq w_{1}^{\prime
}(t)\leq \rho _{1},\, \max_{t\in \lbrack 0,1]}w_{2}(t)=\rho _{2},\text{ 
}-\rho _{2}\leq w_{2}^{\prime }(t)\leq \rho _{2},
\end{equation*}%
or%
\begin{equation*}
-\rho _{1}\leq w_{1}(t)\leq \rho _{1},\, -\rho _{1}\leq w_{1}^{\prime
}(t)\leq \rho _{1},\, -\rho _{2}\leq w_{2}(t)\leq \rho _{2},\, %
\max_{t\in \lbrack 0,1]}w_{2}^{\prime }(t)=\rho _{2}.
\end{equation*}
\end{lem}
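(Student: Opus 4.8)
The plan is to read off the relative boundary directly from the definition, exploiting that $K_{\rho_1,\rho_2}$ is carved out of the cone by strict inequalities on continuous functionals. First I would record that $K_{\rho_1,\rho_2}=K\cap U$, where $U:=\{(u,v)\in (C^1[0,1])^2:\Vert u\Vert_{C^1}<\rho_1,\ \Vert v\Vert_{C^1}<\rho_2\}$ is open in $(C^1[0,1])^2$ because $\Vert\cdot\Vert_{C^1}$ is continuous. Hence $K_{\rho_1,\rho_2}$ is open in the relative topology of $K$ and therefore coincides with its own relative interior, so that $\partial K_{\rho_1,\rho_2}=\overline{K_{\rho_1,\rho_2}}\setminus K_{\rho_1,\rho_2}$, all closures being taken in $K$.

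The core step is to establish the intrinsic description
\begin{equation*}
\partial K_{\rho_1,\rho_2}=\{(w_1,w_2)\in K:\Vert w_1\Vert_{C^1}\le \rho_1,\ \Vert w_2\Vert_{C^1}\le\rho_2,\ \text{and}\ \Vert w_1\Vert_{C^1}=\rho_1\ \text{or}\ \Vert w_2\Vert_{C^1}=\rho_2\}.
\end{equation*}
The inclusion ``$\subseteq$'' is immediate: a boundary point is a limit of points of $K_{\rho_1,\rho_2}$, so by continuity of the two norms it satisfies the non-strict inequalities, while it cannot lie in the open set $K_{\rho_1,\rho_2}$, forcing at least one equality. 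For ``$\supseteq$'' the key device is the cone structure itself: given such a $(w_1,w_2)$, the scaled family $\lambda(w_1,w_2)$ with $0\le\lambda<1$ lies in $K$ since $K$ is a cone, and $\Vert\lambda w_i\Vert_{C^1}=\lambda\Vert w_i\Vert_{C^1}<\rho_i$, so $\lambda(w_1,w_2)\in K_{\rho_1,\rho_2}$; letting $\lambda\to 1^{-}$ places $(w_1,w_2)$ in $\overline{K_{\rho_1,\rho_2}}$, while the active equality keeps it out of $K_{\rho_1,\rho_2}$, so it is a boundary point. This scaling argument, which guarantees that every ``active'' point is reachable from the inside, is the one place where a genuine structural property (here, that $K$ is a cone) is used.

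Finally I would unfold the equalities via $\Vert w\Vert_{C^1}=\max\{\Vert w\Vert_{C},\Vert w'\Vert_{C}\}$. Together with the bounds $\Vert w_i\Vert_{C^1}\le\rho_i$, the disjunction ``$\Vert w_1\Vert_{C^1}=\rho_1$ or $\Vert w_2\Vert_{C^1}=\rho_2$'' splits into exactly four exhaustive cases, according to which of the four functionals $\Vert w_1\Vert_{C},\Vert w_1'\Vert_{C},\Vert w_2\Vert_{C},\Vert w_2'\Vert_{C}$ realizes its bound $\rho_i$, the remaining three being $\le\rho_i$; each inequality $\Vert\cdot\Vert_{C}\le\rho_i$ is precisely the two-sided bound $-\rho_i\le\cdot\le\rho_i$ written in the statement. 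The step meriting care, and the one I expect to be the main obstacle, is the bookkeeping that records the active $C$-norm as a maximum of the function (resp.\ derivative) rather than of its absolute value: one uses that every $w\in\tilde{K}_{i}$ is non-negative on $[a_i,b_i]$ and has non-negative derivative on $[\gamma_i,\delta_i]$, so that $\max_{t\in[0,1]}w(t)\ge 0$ and $\max_{t\in[0,1]}w'(t)\ge 0$, which is exactly the sign information used to phrase the active constraints as $\max_{t}w_i(t)=\rho_i$ and $\max_{t}w_i'(t)=\rho_i$. Everything else is routine, which is why the statement is recorded with the proof omitted.
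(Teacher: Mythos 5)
Your first two steps are correct and are surely the intended skeleton of the argument (the paper omits its proof): $K_{\rho_1,\rho_2}$ is relatively open in $K$, and the scaling $\lambda(w_1,w_2)\to(w_1,w_2)$ as $\lambda\to 1^{-}$, which stays inside $K_{\rho_1,\rho_2}$ because $K$ is a cone and the norms are homogeneous, gives
\begin{equation*}
\partial K_{\rho_1,\rho_2}=\bigl\{(w_1,w_2)\in K:\Vert w_1\Vert_{C^1}\leq\rho_1,\ \Vert w_2\Vert_{C^1}\leq\rho_2,\ \text{and}\ \Vert w_1\Vert_{C^1}=\rho_1\ \text{or}\ \Vert w_2\Vert_{C^1}=\rho_2\bigr\}.
\end{equation*}
The gap is in the final step, exactly the one you flagged as delicate. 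To pass from $\Vert w_1\Vert_{C}=\rho_1$ to $\max_{t\in[0,1]}w_1(t)=\rho_1$ you invoke only that $\max_{t}w_1(t)\geq 0$ (which follows from $w_1\geq c_1\Vert w_1\Vert_C\geq 0$ on $[a_1,b_1]$). That is a non sequitur: since $\Vert w_1\Vert_C=\max\{\max_t w_1(t),\,-\min_t w_1(t)\}$, non-negativity of the first entry does not exclude that the norm is attained \emph{only} at a point where $w_1=-\rho_1$, with $\max_t w_1(t)<\rho_1$. The sign information you cite never identifies the norm with the positive maximum.

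For the undifferentiated components the desired conclusion is nevertheless true, but the proof needs the $C^1$ bound rather than the sign: if $w_1(t_0)=-\rho_1$ for some $t_0$, then, since $w_1\geq c_1\rho_1$ on $[a_1,b_1]$ with $c_1>0$, the mean value theorem forces $\Vert w_1'\Vert_C\geq(1+c_1)\rho_1>\rho_1$, contradicting $\Vert w_1\Vert_{C^1}\leq\rho_1$; hence $\max_t w_1(t)=\rho_1$. For the derivative components, however, no argument can close the gap, because nothing controls the modulus of continuity of $w_1'$, and the claim fails in general. For instance, with $\rho_1=\rho_2=1$, $d_1<1$ and $c_1+d_1(\delta_1-\gamma_1)<1$ (as in all the examples of the paper), take $w_1'$ continuous, equal to $d_1$ on $[\gamma_1,\delta_1]$, equal to $0$ away from it except for a thin triangular spike reaching $-1$ at some $t_0\notin[\gamma_1,\delta_1]$, and set $w_1(t)=\beta+\int_0^t w_1'(s)\,ds$ with $\beta>0$ chosen so that $\min_{[a_1,b_1]}w_1\geq c_1\Vert w_1\Vert_C$ and $\Vert w_1\Vert_C<1$. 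Then $(w_1,0)\in K$, $\Vert w_1\Vert_{C^1}=1$, so $(w_1,0)\in\partial K_{1,1}$ by your own scaling argument, yet all four displayed alternatives fail: $\max_t w_1(t)<1$, $\max_t w_1'(t)=d_1<1$, and $w_2\equiv 0$. (The converse implication for the second and fourth alternatives is equally problematic, since they do not bound $w_1'$, respectively $w_2'$, from below.) So the second and fourth cases are not provable as literally written; they hold once $\max_t w_i'(t)=\rho_i$ is replaced by $\max_t|w_i'(t)|=\rho_i$, i.e.\ $\Vert w_i'\Vert_C=\rho_i$, and with that reading your first two steps already constitute a complete proof. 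This norm-level statement is also exactly what the index computations in Lemmas~\ref{ind1b} and~\ref{idx0b1} actually use.
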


\section{Existence results and non-existence results}

The existence results are obtained via the fixed point index on the set 
$K_{\rho _{1},\rho _{2}}$ given by~\eqref{Krho12}.
Firstly we obtain sufficient conditions for the fixed point index on the set $
K_{\rho _{1},\rho _{2}}$ to be 1.

\begin{lem}
\label{ind1b} Assume that
\begin{itemize}
\item[$(\mathrm{I}_{\rho _{1},\rho_2}^{1})$] 
 there exist $\rho _{1},\rho _{2}>0$ such that for every $%
i=1,2,$ 
\begin{equation}
f_{i}^{\rho _{1},\rho _{2}}<\min \left\{ m_{i},m_{i}^{\ast }\right\} 
\label{eqmestt}
\end{equation}%
{} where 
\begin{equation}
f_{i}^{\rho _{1},\rho _{2}}:=\sup \left\{ \frac{%
f_{i}(t,u_{1},u_{2},v_{1},v_{2})}{\rho _{i}}:(t,u_{1},u_{2},v_{1},v_{2})\in
\lbrack 0,1]\times \lbrack -\rho _{1},\rho _{1}]^{2}\times \lbrack -\rho
_{2},\rho _{2}]^{2}\right\} ,  \label{f^rho12}
\end{equation}%
\begin{equation}
\frac{1}{m_{i}}:=\max_{t\in \lbrack 0,1]}\int_{0}^{1}|k_{i}(t,s)|g_{i}(s)\,ds
\label{mi}
\end{equation}%
and{}%
\begin{equation}
\frac{1}{m_{i}^{\ast }}:=\max_{t\in \lbrack 0,1]}\int_{0}^{1}\left\vert 
\frac{\partial k_{i}}{\partial t}(t,s)\right\vert g_{i}(s)\,ds  \label{masti}
\end{equation}
\end{itemize}
Then $i_{K}(T,K_{\rho _{1},\rho _{2}})=1$.
\end{lem}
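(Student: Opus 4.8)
The plan is to apply part (2) of Lemma~\ref{index_lem} with $F=T$ and $\Omega_K=K_{\rho_1,\rho_2}$: it suffices to show that $\mu(u,v)\neq T(u,v)$ for every $(u,v)\in\partial K_{\rho_1,\rho_2}$ and every $\mu\geq 1$. I would argue by contradiction, supposing that there exist such a boundary point and a $\mu\geq 1$ with $\mu u = T_1(u,v)$ and $\mu v = T_2(u,v)$.

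First I would extract the two consequences of lying on $\partial K_{\rho_1,\rho_2}$. Since membership in the closure of $K_{\rho_1,\rho_2}$ forces $\|u\|_{C^1}\leq\rho_1$ and $\|v\|_{C^1}\leq\rho_2$, every argument fed into the nonlinearities satisfies $u(s),u'(s)\in[-\rho_1,\rho_1]$ and $v(s),v'(s)\in[-\rho_2,\rho_2]$; hence, by the definition~\eqref{f^rho12}, the pointwise bound $f_i(s,u(s),u'(s),v(s),v'(s))\leq f_i^{\rho_1,\rho_2}\rho_i$ holds. In addition, the Lemma characterizing $\partial K_{\rho_1,\rho_2}$ guarantees that at least one of $\|u\|_{C^1}=\rho_1$ or $\|v\|_{C^1}=\rho_2$ is realized.

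Suppose $\|u\|_{C^1}=\rho_1$ (the case $\|v\|_{C^1}=\rho_2$ being symmetric, with $T_2$, $m_2$, $m_2^*$ in place of $T_1$, $m_1$, $m_1^*$). Taking $C^1$-norms in $\mu u = T_1(u,v)$ gives $\mu\rho_1 = \|T_1(u,v)\|_{C^1} = \max\{\|T_1(u,v)\|_C, \|(T_1(u,v))'\|_C\}$. I would estimate the two terms separately: using $(A3)$ together with the pointwise bound on $f_1$, the function norm is at most $f_1^{\rho_1,\rho_2}\rho_1\,\max_t\int_0^1|k_1(t,s)|g_1(s)\,ds = f_1^{\rho_1,\rho_2}\rho_1/m_1$, while the derivative norm is at most $f_1^{\rho_1,\rho_2}\rho_1\,\max_t\int_0^1\bigl|\frac{\partial k_1}{\partial t}(t,s)\bigr|g_1(s)\,ds = f_1^{\rho_1,\rho_2}\rho_1/m_1^*$. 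Since hypothesis $(\mathrm{I}_{\rho_1,\rho_2}^1)$ gives $f_1^{\rho_1,\rho_2}<\min\{m_1,m_1^*\}$, both quantities are strictly less than $\rho_1$, so $\|T_1(u,v)\|_{C^1}<\rho_1$. This forces $\mu\rho_1<\rho_1$, i.e. $\mu<1$, contradicting $\mu\geq1$.

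The only real subtlety, and the reason the hypothesis involves the \emph{minimum} of the two constants $m_i$ and $m_i^*$ rather than a single one, is that the $C^1$-norm of $T_i(u,v)$ may be realized either by the function or by its derivative; so both the kernel bound via $\phi_i$ (controlled by $m_i$) and the derivative-kernel bound via $\psi_i$ (controlled by $m_i^*$) must be invoked at once. Once both estimates are in hand the contradiction is immediate, so I do not expect a genuine obstacle beyond this bookkeeping: the compactness and cone-invariance of $T$ needed even to define $i_K(T,K_{\rho_1,\rho_2})$ are already supplied by the first Lemma of the previous section.
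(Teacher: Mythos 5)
Your proposal is correct and follows essentially the same route as the paper: both argue by contradiction that $\mu(u,v)=T(u,v)$ is impossible for $(u,v)\in\partial K_{\rho_1,\rho_2}$ and $\mu\geq 1$, invoke property (2) of the fixed point index lemma, and use exactly the estimates furnished by $m_i$ and $m_i^{\ast}$. The only difference is bookkeeping: the paper treats the four boundary cases ($\Vert u\Vert_C=\rho_1$, $\Vert u'\Vert_C=\rho_1$, $\Vert v\Vert_C=\rho_2$, $\Vert v'\Vert_C=\rho_2$) separately, each using a single constant, while you merge them into two symmetric cases by estimating $\Vert T_1(u,v)\Vert_{C^1}$ as the maximum of the two bounds at once.
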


\begin{proof}
We claim that $\lambda (u,v)\neq T(u,v)$ for every $(u,v)\in \partial
K_{\rho _{1},\rho _{2}}$ and for every $\lambda \geq 1$, which implies that
the index is 1 on $K_{\rho _{1},\rho _{2}},$ by Lemma~\ref{index_lem}~(3).

Assume this is not true. Then there exist $\lambda \geq 1$ and $(u,v)\in
\partial K_{\rho _{1},\rho _{2}}$ such that $\lambda (u,v)=T(u,v)$.

Consider that 
\begin{equation}
\Vert u\Vert _{C}=\rho _{1},\Vert u^{\prime }\Vert _{C}\leq \rho _{1},\Vert
v\Vert _{C}\leq \rho _{2}\text{ and }\Vert v^{\prime }\Vert _{C}\leq \rho
_{2}  \label{u=rho1}
\end{equation}%
holds. Then we have
\begin{equation*}
\lambda \left\vert u(t)\right\vert \leq \int_{0}^{1}\left\vert
k_{1}(t,s)\right\vert g_{1}(s)f_{1}(s,u(s),u^{\prime }(s),v(s),v^{\prime
}(s))\,ds,
\end{equation*}%
and, taking the maximum over $[0,1],$ by (\ref{f^rho12}) and (\ref{mi})%
\begin{eqnarray*}
\lambda {\rho _{1}} &\leq &\max_{t\in \lbrack 0,1]}\int_{0}^{1}\left\vert
k_{1}(t,s)\right\vert g_{1}(s)f_{1}(s,u(s),u^{\prime }(s),v(s),v^{\prime
}(s))\,ds \\
&\leq &\max_{t\in \lbrack 0,1]}\int_{0}^{1}\left\vert k_{1}(t,s)\right\vert
g_{1}(s){\rho _{1}}f_{1}^{\rho _{1},\rho _{2}}\,ds \\
&\leq &{\rho _{1}}f_{1}^{\rho _{1},\rho _{2}}\dfrac{1}{m_{1}}.
\end{eqnarray*}%
By \eqref{eqmestt}, $\lambda \rho _{1}<\rho _{1},$ which contradicts the
fact that $\lambda \geq 1$.

If
\begin{equation*}
\Vert u\Vert _{C}\leq \rho _{1},\Vert u^{\prime }\Vert _{C}=\rho _{1},\Vert
v\Vert _{C}\leq \rho _{2}\text{ and }\Vert v^{\prime }\Vert _{C}\leq \rho
_{2},
\end{equation*}%
then we have
\begin{equation*}
\lambda \left\vert u^{\prime }(t)\right\vert \leq \int_{0}^{1}\left\vert 
\frac{\partial k_{1}}{\partial t}(t,s)\right\vert
g_{1}(s)f_{1}(s,u(s),u^{\prime }(s),v(s),v^{\prime }(s))\,ds.
\end{equation*}%
By (\ref{f^rho12}) and (\ref{masti}) and, taking the maximum in $[0,1],$%
\begin{eqnarray*}
\lambda {\rho _{1}} &\leq &\max_{t\in \lbrack 0,1]}\int_{0}^{1}\left\vert 
\frac{\partial k_{i}}{\partial t}(t,s)\right\vert
g_{1}(s)f_{1}(s,u(s),u^{\prime }(s),v(s),v^{\prime }(s))\,ds \\
&\leq &\max_{t\in \lbrack 0,1]}\int_{0}^{1}\left\vert \frac{\partial k_{i}}{%
\partial t}(t,s)\right\vert g_{1}(s){\rho _{1}}f_{1}^{\rho _{1},\rho _{2}}\,ds \\
&\leq &{\rho _{1}}f_{1}^{\rho _{1},\rho _{2}}\dfrac{1}{m_{1}^{\ast }},
\end{eqnarray*}%
we obtain a similar contradiction as above.

The other cases follow the same arguments.
\end{proof}

Secondly, we provide a condition to have a null fixed point index on $K_{\rho_{1},\rho _{2}}$.
\begin{lem}\label{idx0b1} 
Assume that

\begin{enumerate}
\item[$(\mathrm{I}_{\rho _{1},\rho_2}^{0})$] there exist $\rho _{1},\rho _{2}>0$ such that for every $%
i=1,2,$%
\begin{equation}\label{eqMest}
f_{1,(\rho _{1},\rho _{2})}>M_{1},\, f_{1,(\rho _{1},\rho _{2})}^{\ast
}>M_{1}^{\ast },\, f_{2,(\rho _{1},\rho _{2})}>M_{2},\, f_{2,(\rho
_{1},\rho _{2})}^{\ast }>M_{2}^{\ast },  
\end{equation}%
{} where 
\begin{align*}
f_{1,({\rho _{1},\rho _{2}})}& :=\inf \left\{ 
\begin{array}{l}
\dfrac{f_{1}(t,u_{1},u_{2},v_{1},v_{2})}{\rho _{1}}: \\ 
(t,u_{1},u_{2},v_{1},v_{2})\in \lbrack a_{1},b_{1}]\times \lbrack c_{1}\rho
_{1},\rho _{1}]\times \lbrack -\rho _{1},\rho _{1}]\times \lbrack -\rho
_{2},\rho _{2}]^{2}%
\end{array}%
\right\} , \\
f_{1,(\rho _{1},\rho _{2})}^{\ast }& :=\inf \left\{ 
\begin{array}{l}
\dfrac{f_{1}(t,u_{1},u_{2},v_{1},v_{2})}{\rho _{1}}: \\ 
(t,u_{1},u_{2},v_{1},v_{2})\in \lbrack \gamma _{1},\delta_{1}]\times \lbrack
-\rho _{1},\rho _{1}]\times \lbrack d_{1}\rho _{1},\rho _{1}]\times \lbrack
-\rho _{2},\rho _{2}]^{2}%
\end{array}%
\right\} , \\
f_{2,({\rho _{1},\rho _{2}})}& :=\inf \left\{ 
\begin{array}{l}
\dfrac{f_{2}(t,u_{1},u_{2},v_{1},v_{2})}{\rho _{2}}: \\ 
(t,u_{1},u_{2},v_{1},v_{2})\in \lbrack a_{2},b_{2}]\times \lbrack -\rho
_{1},\rho _{1}]^{2}\times \lbrack c_{2}\rho _{2},\rho _{2}]\times \lbrack
-\rho _{2},\rho _{2}]%
\end{array}%
\right\} , \\
f_{2,(\rho _{1},\rho _{2})}^{\ast }& :=\inf \left\{ 
\begin{array}{l}
\dfrac{f_{2}(t,u_{1},u_{2},v_{1},v_{2})}{\rho _{2}}: \\ 
(t,u_{1},u_{2},v_{1},v_{2})\in \lbrack \gamma_{2},\delta _{2}]\times \lbrack
-\rho _{1},\rho _{1}]^{2}\times \lbrack -\rho _{2},\rho _{2}]\times \lbrack
d_{2}\rho _{2},\rho _{2}]%
\end{array}%
\right\} ,
\end{align*}%
and%
\begin{eqnarray}\label{M*_i}
\frac{1}{M_{i}} &:&=\min_{t\in \lbrack
a_{i},b_{i}]}\int_{a_{i}}^{b_{i}}k_{i}(t,s)g_{i}(s)\,ds,\,   \label{M_i}
\\
\frac{1}{M_{i}^{\ast }} &:&=\min_{t\in \lbrack \gamma _{i},\delta_{i}]}
\int_{\gamma_{i}}^{\delta _{i}}\frac{\partial k_{i}}{\partial t}%
(t,s)g_{i}(s)\,ds\,   
\end{eqnarray}
\end{enumerate}

Then $i_{K}(T,K_{\rho _{1},\rho _{2}})=0$.
\end{lem}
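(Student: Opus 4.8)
The plan is to invoke part~(1) of Lemma~\ref{index_lem}, for which I must exhibit an element $e\in K\setminus\{0\}$ such that $(u,v)\neq T(u,v)+\lambda e$ for every $(u,v)\in\partial K_{\rho_1,\rho_2}$ and every $\lambda>0$. A convenient choice is the pair of constant functions $e=(1,1)$: from~\eqref{Ktil} one sees at once that $1\in\tilde K_i$, because $\min_{t\in[a_i,b_i]}1=1\ge c_i=c_i\|1\|_C$, while the derivative of $1$ is identically zero, so the derivative requirement in~\eqref{Ktil} holds trivially. Hence $e\in K$ and $e\neq 0$.

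Arguing by contradiction, I would suppose that $(u,v)=T(u,v)+\lambda e$ for some $(u,v)\in\partial K_{\rho_1,\rho_2}$ and some $\lambda>0$, so that componentwise $u=T_1(u,v)+\lambda$ and $v=T_2(u,v)+\lambda$ (recalling $e=(1,1)$). The characterization of $\partial K_{\rho_1,\rho_2}$ established earlier forces one of four alternatives to hold; I describe the first one, the remaining three being treated by the same mechanism applied to $u'$, $v$ and $v'$ and to the corresponding index-$2$ data. Assume therefore that $\|u\|_C=\rho_1$, while $\|u'\|_C\le\rho_1$ and $\|v\|_C,\|v'\|_C\le\rho_2$.

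The pivotal point is that, for $t\in[a_1,b_1]$, the third inequality in~(A3) gives $k_1(t,s)\ge c_1\phi_1(s)\ge 0$ for a.e.\ $s\in[0,1]$, so the integrand $k_1(t,s)g_1(s)f_1(\cdots)$ is nonnegative and the integral over $[0,1]$ dominates that over the subinterval $[a_1,b_1]$. On $[a_1,b_1]$ the membership $(u,v)\in K$ forces $u(s)\in[c_1\rho_1,\rho_1]$, $u'(s)\in[-\rho_1,\rho_1]$ and $v(s),v'(s)\in[-\rho_2,\rho_2]$, which is exactly the domain defining $f_{1,(\rho_1,\rho_2)}$; hence $f_1(s,u(s),u'(s),v(s),v'(s))\ge\rho_1 f_{1,(\rho_1,\rho_2)}$ there. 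Using in addition $\lambda\ge 0$, I obtain for $t\in[a_1,b_1]$
\[
u(t)\ \ge\ \int_{a_1}^{b_1}k_1(t,s)g_1(s)f_1(\cdots)\,ds\ \ge\ \rho_1 f_{1,(\rho_1,\rho_2)}\int_{a_1}^{b_1}k_1(t,s)g_1(s)\,ds,
\]
and taking the minimum over $t\in[a_1,b_1]$, together with $f_{1,(\rho_1,\rho_2)}>M_1$ from~\eqref{eqMest} and the definition~\eqref{M_i} of $1/M_1$, yields $\min_{t\in[a_1,b_1]}u(t)>\rho_1$. This contradicts $u(t)\le\|u\|_C=\rho_1$.

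For the alternative $\|u'\|_C=\rho_1$ I would differentiate the first identity, obtaining $u'=\bigl(T_1(u,v)\bigr)'$ since $e_1=1$ has vanishing derivative, and repeat the argument with $\partial_t k_1(t,s)\ge d_1\psi_1(s)\ge 0$ on $[\gamma_1,\delta_1]$, with $u'(s)\in[d_1\rho_1,\rho_1]$ matching the domain of $f^*_{1,(\rho_1,\rho_2)}$, and with the inequality $f^*_{1,(\rho_1,\rho_2)}>M^*_1$; the two alternatives for $v$ use the index-$2$ data in the same fashion. The step demanding the most care is precisely this matching: checking in each case that the ranges of $(u,u',v,v')$ over the relevant subinterval coincide with the domains of the infima $f_{i,(\rho_1,\rho_2)}$ and $f^*_{i,(\rho_1,\rho_2)}$, and that the sign information in~(A3) (namely $k_i\ge 0$ on $[a_i,b_i]$ and $\partial_t k_i\ge 0$ on $[\gamma_i,\delta_i]$) legitimizes restricting the integrals to those subintervals. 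Once this bookkeeping is done, the single element $e=(1,1)$ defeats all four alternatives, and part~(1) of Lemma~\ref{index_lem} gives $i_K(T,K_{\rho_1,\rho_2})=0$.
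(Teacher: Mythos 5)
Your proof is correct and takes essentially the same approach as the paper: the same test element $e=(1,1)$, the same case analysis on $\partial K_{\rho_1,\rho_2}$, the same use of the positivity in (A3) and of the cone inequalities to restrict the integral to $[a_1,b_1]$ (resp. $[\gamma_1,\delta_1]$) and contradict $f_{1,(\rho_1,\rho_2)}>M_1$ (resp. $f_{1,(\rho_1,\rho_2)}^{\ast}>M_1^{\ast}$), concluding via part (1) of the index lemma. The differences are cosmetic: you take a minimum over $[a_1,b_1]$ where the paper takes a maximum and you discard $\lambda$ early (your estimate in fact works for all $\lambda\geq 0$, which also gives that $T$ is fixed point free on the boundary), and in the derivative case your remark that the constant perturbation $\lambda e$ differentiates away is actually slightly more careful than the paper's own computation, which formally carries the term $\lambda e(t)$ into the expression for $u^{\prime}$.
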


\begin{proof}
Consider $e(t)\equiv 1$ for $t\in \lbrack 0,1],$ and note that $(e,e)\in K$.

We claim that 
\begin{equation*}
(u,v)\neq T(u,v)+\lambda (e,e)\quad \text{for }(u,v)\in \partial K_{\rho
_{1},\rho _{2}}\quad \text{and }\lambda \geq 0.
\end{equation*}%
Assume, by contradiction, that there exist $(u,v)\in \partial K_{\rho
_{1},\rho _{2}}$ and $\lambda \geq 0$ such that $(u,v)=T(u,v)+\lambda (e,e)$.

Consider that (\ref{u=rho1}) holds. Then we can assume that for all $t\in
\lbrack a_{1},b_{1}]$ we have 
\begin{align*}
{c_{1}}\rho _{1} \leq u(t)\leq {\rho _{1}},-\rho _{1}\leq u^{\prime
}(t)\leq \rho _{1}, 
-\rho _{2} \leq v(t)\leq {\rho _{2}}\text{ and }-\rho _{2}\leq v^{\prime
}(t)\leq {\rho _{2}}.
\end{align*}%
Then, for $t\in \lbrack a_{1},b_{1}]$, we obtain, by~\eqref{eqMest}, 
\begin{eqnarray*}
u(t) &=&\int_{0}^{1}k_{1}(t,s)g_{1}(s)f_{1}(s,u(s),u^{\prime
}(s),v(s),v^{\prime }(s))\,ds+\lambda e(t) \\
&\geq &\int_{a_{1}}^{b_{1}}k_{1}(t,s)g_{1}(s)f_{1}(s,u(s),u^{\prime
}(s),v(s),v^{\prime }(s))\,ds+{\lambda } \\
&\geq &\int_{a_{1}}^{b_{1}}k_{1}(t,s)g_{1}(s)\rho _{1}f_{1,({\rho _{1},\rho _{2}})}\,ds+%
{\lambda }.
\end{eqnarray*}%
Taking the maximum over $[a_{1},b_{1}]$ gives 
\begin{equation*}
\rho _{1}\geq \max_{t\in \lbrack a_{1},b_{1}]}u(t)\geq {\rho _{1}}f_{1,(\rho
_{1},\rho _{2})}\frac{1}{M_{1}}+{\lambda }.
\end{equation*}%
By \eqref{eqMest}, we obtain the following contradiction: $\rho _{1}>\rho
_{1}+\lambda $.

Suppose that 
\begin{equation*}
-\rho _{1}\leq u(t)\leq \rho _{1},\max_{t\in \lbrack 0,1]}u^{\prime
}(t)=\rho _{1},-\rho _{2}\leq v(t)\leq \rho _{2},-\rho _{2}\leq v^{\prime
}(t)\leq \rho _{2},
\end{equation*}%
holds. Then, that for all $t\in \lbrack \gamma _{1},\delta_{1}]$ we have 
\begin{eqnarray*}
u^{\prime }(t) &=&\int_{0}^{1}\frac{\partial k_{1}}{\partial t}%
(t,s)g_{1}(s)f_{1}(s,u(s),u^{\prime }(s),v(s),v^{\prime }(s))\,ds+\lambda e(t)
\\
&\geq &\int_{\gamma _{1}}^{\delta_{1}}\frac{\partial k_{1}}{\partial t}%
(t,s)g_{1}(s)f_{1}(s,u(s),u^{\prime }(s),v(s),v^{\prime }(s))\,ds+{\lambda } \\
&\geq &\int_{\gamma _{1}}^{\delta_{1}}\frac{\partial k_{1}}{\partial t}%
(t,s)g_{1}(s)\rho _{1}f_{1,(\rho _{1},\rho _{2})}^{\ast }ds+{\lambda }.
\end{eqnarray*}%
Taking the maximum over $[\gamma _{1},\delta_{1}]$ gives 
\begin{equation*}
\rho _{1}\geq \max_{t\in \lbrack \gamma _{1},\delta_{1}]}u^{\prime }(t)\geq {%
\rho _{1}}f_{1,(\rho _{1},\rho _{2})}^{\ast }\frac{1}{M_{1}^{\ast }}+{%
\lambda },
\end{equation*}%
and, by \eqref{M*_i}, a similar contradiction is achieved.

For the other cases the procedure is analogous.
\end{proof}
In the following Theorem we provide a result valid for up to three nontrivial solutions, but 
it is possible to prove the existence of four or more nontrivial solutions; see for 
example~\cite{kljdeds} for the kind of results that may be stated.
We omit the proof that follows, in a routine manner, by means of the properties of fixed point index.

\begin{thm}\label{mult-sys}
The system~\eqref{syst} has at least one nontrivial solution
in $K$ if one of the following conditions holds.

\begin{enumerate}

\item[$(S_{1})$]  For $i=1,2$ there exist $\rho _{i},r _{i}\in (0,\infty )$ with $\rho
_{i}/c_i<r _{i}$ such that 
$(\mathrm{I}_{\rho _{1},\rho_2}^{0})$, 
$(\mathrm{I}_{r _{1},r_2}^{1})$ hold.

\item[$(S_{2})$] For $i=1,2$ there exist $\rho _{i},r _{i}\in (0,\infty )$ with $\rho
_{i}<r _{i}$ such that 
$(\mathrm{I}_{\rho _{1},\rho_2}^{1}),$
$(\mathrm{I}_{r _{1},r_2}^{0})$ hold.
\end{enumerate}

The system \eqref{syst} has at least two nontrivial solutions in $K$ if one of the following conditions holds.

\begin{enumerate}

\item[$(S_{3})$] For $i=1,2$ there exist $\rho _{i},r _{i},s_i\in (0,\infty )$
with $\rho _{i}/c_i<r_i <s _{i}$ such that 
$(\mathrm{I}_{\rho_{1},\rho_2}^{0})$, 
$(\mathrm{I}_{r _{1},r_2}^{1})$ 
\text{and}
$(\mathrm{I}_{s _{1},s_2}^{0})$
hold.

\item[$(S_{4})$] For $i=1,2$ there exist $\rho _{i},r _{i},s_i\in (0,\infty )$
with $\rho _{i}<r _{i}$ and $r _{i}/c_i<s _{i}$ such that 
$(\mathrm{I}_{\rho _{1},\rho_2}^{1}),$
$(\mathrm{I}_{r _{1},r_2}^{0})$ and
$(\mathrm{I}_{s _{1},s_2}^{1})$ hold.
\end{enumerate}

The system \eqref{syst} has at least three nontrivial solutions in $K$ if one
of the following conditions holds.

\begin{enumerate}
\item[$(S_{5})$] For $i=1,2$ there exist $\rho _{i},r _{i},s_i,\sigma_i\in
(0,\infty )$ with $\rho _{i}/c_i<r _{i}<s _{i}$ and $s _{i}/c_i<\sigma
_{i}$ such that 
$(\mathrm{I}_{\rho _{1},\rho_2}^{0}),$ 
$(\mathrm{I}_{r _{1},r_2}^{1}),$ 
$(\mathrm{I}_{s_1,s_2}^{0})$ and 
$(\mathrm{I}_{\sigma _{1},\sigma_2}^{1})$ hold.

\item[$(S_{6})$] For $i=1,2$ there exist 
$\rho _{i},r _{i},s_i,\sigma_i\in (0,\infty )$ with 
$\rho _{i}<r _{i}$ and $r _{i}/c_i<s _{i}<\sigma _{i}$
such that $(\mathrm{I}_{\rho _{1},\rho_2}^{1}),$ 
$(\mathrm{I}_{r_{1},r_2}^{0}),$ $(\mathrm{I}_{s _{1},s_2}^{1})$ 
and $(\mathrm{I}_{\sigma _{1},\sigma_2}^{0})$ hold.
\end{enumerate}
\end{thm}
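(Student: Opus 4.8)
The plan is to read the fixed point index off each set $K_{\rho_1,\rho_2}$ directly from Lemmas~\ref{ind1b} and~\ref{idx0b1}, and then to chain these sets together using the excision/additivity property recorded in Lemma~\ref{index_lem}(4). Concretely, condition $(\mathrm{I}_{\rho_1,\rho_2}^{1})$ gives $i_K(T,K_{\rho_1,\rho_2})=1$ by Lemma~\ref{ind1b}, while $(\mathrm{I}_{\rho_1,\rho_2}^{0})$ gives $i_K(T,K_{\rho_1,\rho_2})=0$ by Lemma~\ref{idx0b1}. The single geometric fact needed is that the relevant $C^1$-balls are strictly nested: since $c_i\in(0,1]$ we have $\rho_i\le\rho_i/c_i$, so every inequality appearing in $(S_1)$--$(S_6)$, whether of the form $\rho_i<r_i$ or $\rho_i/c_i<r_i$, implies $\rho_i<r_i$ and hence $\overline{K_{\rho_1,\rho_2}}\subset K_{r_1,r_2}$, the closure being taken relative to $K$.

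I would first treat the one-solution cases as the template. Under $(S_1)$, Lemma~\ref{idx0b1} gives $i_K(T,K_{\rho_1,\rho_2})=0$ and Lemma~\ref{ind1b} gives $i_K(T,K_{r_1,r_2})=1$; since $\rho_i/c_i<r_i$ forces $\overline{K_{\rho_1,\rho_2}}\subset K_{r_1,r_2}$, Lemma~\ref{index_lem}(4) produces a fixed point $(u,v)\in K_{r_1,r_2}\setminus\overline{K_{\rho_1,\rho_2}}$. Because $0\in K_{\rho_1,\rho_2}\subset\overline{K_{\rho_1,\rho_2}}$, this fixed point satisfies $\|(u,v)\|\ne 0$, i.e.\ it is a nontrivial solution of~\eqref{syst}. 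Case $(S_2)$ is identical with the roles of the two sets exchanged, invoking instead the second statement of Lemma~\ref{index_lem}(4).

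For the multiplicity assertions I would simply iterate this argument along the chain of sets singled out by the hypotheses. For instance, under $(S_5)$ the inequalities $\rho_i/c_i<r_i<s_i$ and $s_i/c_i<\sigma_i$ give the strict chain $\overline{K_{\rho_1,\rho_2}}\subset K_{r_1,r_2}$, $\overline{K_{r_1,r_2}}\subset K_{s_1,s_2}$, $\overline{K_{s_1,s_2}}\subset K_{\sigma_1,\sigma_2}$, while Lemmas~\ref{ind1b} and~\ref{idx0b1} assign to these four sets the indices $0,1,0,1$. Applying Lemma~\ref{index_lem}(4) to each consecutive pair yields a fixed point in each of the three annular regions $K_{r_1,r_2}\setminus\overline{K_{\rho_1,\rho_2}}$, $K_{s_1,s_2}\setminus\overline{K_{r_1,r_2}}$ and $K_{\sigma_1,\sigma_2}\setminus\overline{K_{s_1,s_2}}$; these regions are pairwise disjoint, so the three solutions are distinct, and each is nontrivial since it lies outside a set containing $0$. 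The cases $(S_3),(S_4),(S_6)$ are the analogous shorter chains.

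The only genuinely delicate point is the nesting of the relative closures in the product cone $K\subset(C^1[0,1])^2$: one must verify that $\overline{K_{\rho_1,\rho_2}}=\{(u,v)\in K:\|u\|_{C^1}\le\rho_1,\ \|v\|_{C^1}\le\rho_2\}$ and that this sits inside $K_{r_1,r_2}$ precisely when $\rho_i<r_i$ for $i=1,2$. Once this and the pairwise disjointness of the annuli are in place, everything else is mechanical bookkeeping of which lemma applies on which boundary, which is exactly why the statement may be left to the reader. I would also note that the sharper gap $\rho_i/c_i<r_i$, demanded whenever the inner set carries index $0$, is what renders the two competing requirements compatible in practice: $(\mathrm{I}_{\rho_1,\rho_2}^{0})$ bounds $f_i$ from below on a box in which the active variable ranges only over $[c_i\rho_i,\rho_i]$, whereas $(\mathrm{I}_{r_1,r_2}^{1})$ bounds it from above on the larger box $|\cdot|\le r_i$, so a spread by the factor $1/c_i$ is what keeps both estimates simultaneously attainable.
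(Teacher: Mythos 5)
Your proposal is correct and is precisely the routine argument the paper has in mind when it omits the proof: read the index $0$ or $1$ off Lemmas~\ref{idx0b1} and~\ref{ind1b} on each set $K_{\rho_1,\rho_2}$, note that $c_i\in(0,1]$ makes all the stated inequalities yield the nesting $\overline{K_{\rho_1,\rho_2}}\subset K_{r_1,r_2}$, and apply Lemma~\ref{index_lem}(4) to consecutive pairs, the resulting fixed points being nontrivial (since $0$ lies in the excised closure) and distinct (since the annular regions are pairwise disjoint). The only cosmetic remark is that you do not need the exact identification of the relative closure: the inclusion $\overline{K_{\rho_1,\rho_2}}\subseteq\{(u,v)\in K:\Vert u\Vert_{C^{1}}\leq\rho_1,\ \Vert v\Vert_{C^{1}}\leq\rho_2\}$, immediate from continuity of the norms, already suffices.
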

In the next example we illustrate the applicability of Theorem~\ref{mult-sys}.
\begin{ex}
Consider the system
\begin{equation}\label{exsystch}
 \left\{
\begin{array}{c}
u(t)=\int_{0}^{1}s(\frac{7}{8}t-t^2)\left( \left( u(t)\right) ^{2}+\left(
u^{\prime }(t)\right) ^{2}\right) \left( 2+\cos \left( v(t)\, v^{\prime
}(t)\right) \right)\,ds, \\ 
v(t)=\int_{0}^{1}s(\frac{11}{10}t-t^2-\frac{1}{10})\left( \left( v(t)\right) ^{2}+\left(
v^{\prime }(t)\right) ^{2}\right) \left( 2-\sin \left( u(t)\, u^{\prime
}(t)\right) \right)\,ds.%
\end{array}
 \right.
\end{equation}%
In this case we have
\begin{align*}
&k_1(t,s)=s(\frac{7}{8}t-t^2), k_2(t,s)=s(\frac{11}{10}t-t^2-\frac{1}{10}),\\
&\frac{\partial k_{1}}{\partial t}(t,s)=s(\frac{7}{8}-2t), \frac{\partial k_{2}}{\partial t}(t,s)=s(\frac{11}{10}-2t),\\
&g_{1}(t) \equiv 1,\, g_{2}(t)\equiv 1,\\
&f_{1}(t,u_{1},u_{2},v_{1},v_{2}) =\left( \left( u_{1}\right)
^{2}+\left( u_{2}\right) ^{2}\right) \left( 2+\cos \left( v_{1}\, %
v_{2}\right) \right), \\
&f_{2}(t,u_{1},u_{2},v_{1},v_{2}) =\left( \left( v_{1}\right)
^{2}+\left( v_{2}\right) ^{2}\right) \left( 2-\sin \left( u_{1}\, %
u_{2}\right) \right).
\end{align*}
Note that $k_1$, $k_2$, $\dfrac{\partial k_{1}}{\partial t}$ and  $\dfrac{\partial k_{2}}{\partial t}$ change sign on $[0,1]^2$. The assumption
$(A3)$ is satisfied with the choices
\begin{align*}
\phi _{1}(s) =\frac{49}{256}s ,&\, \phi _{2}(s)=\frac{81}{400}s, \\
a_{1} =\frac{7}{32},\, b_{1}=\frac{21}{32},\, c_{1} =\frac{3}{4},&\ a_{2} =\frac{13}{40},\, b_{2}=\frac{31}{40},\, c_{2}=\frac{3}{4}\\
\psi _{1}(s) =\frac{9}{8}s ,&\, \psi _{2}(s)= \frac{11}{10}s,\\
\gamma _{1}=0,\, \delta _{1}=\frac{7}{32},\, d_{1}=\frac{7}{18},&\, \gamma _{2}=0,\, \delta _{2}=\frac{11}{40},\, d_{2}=\frac{13}{44}, \\
\end{align*}
Furthermore $(A4)$ is satisfied since
$$
\int_{\frac{7}{32}}^{\frac{21}{32}}\frac{49}{256}s\,ds=\frac{2401}{65536},\,
\int_{\frac{13}{40}}^{\frac{31}{40}}\frac{81}{400}s\,ds=\frac{8019}{160000},\,
\int_{0}^{\frac{7}{32}}\frac{9}{8}s\,ds=\frac{441}{16384},\,
\int_{0}^{\frac{11}{40}}\frac{11}{10}s\,ds=\frac{1331}{32000}.
$$
By direct calculation we have
$$
\frac{1}{m_{1}}=\max_{t\in \lbrack 0,1]}\int_{0}^{1} \left\vert s(\frac{7}{8}t-t^2)\right\vert \,ds=\frac{49}{512},\,
\frac{1}{m_{2}}=\max_{t\in \lbrack 0,1]}\int_{0}^{1} \left\vert s(\frac{11}{10}t-t^2-\frac{1}{10})\right\vert \,ds=\frac{81}{800},
$$
$$
\frac{1}{m_{1}^{\ast }}=\max_{t\in \lbrack 0,1]}\int_{0}^{1}\left\vert  s(\frac{7}{8}-2t)\right\vert \,ds=\frac{9}{16},\, 
\frac{1}{m_{2}^{\ast }}=\max_{t\in \lbrack 0,1]}\int_{0}^{1}\left\vert  s(\frac{11}{10}-2t)\right\vert \,ds=\frac{11}{20},
$$
$$
\frac{1}{M_{1}} =\min_{t\in \lbrack
\frac{7}{32}, \frac{21}{32}]}\int_{\frac{7}{32}}^{\frac{21}{32}}s(\frac{7}{8}t-t^2)\,ds=\frac{7203}{262144},\, 
\frac{1}{M_{2}} =\min_{t\in \lbrack
\frac{13}{40}, \frac{31}{40}]}\int_{\frac{13}{40}}^{\frac{31}{40}}s(\frac{11}{10}t-t^2-\frac{1}{10})\,ds=\frac{24057}{640000},\, 
$$
$$
\frac{1}{M_{1}^{\ast }} =\min_{t\in \lbrack {0},{\frac{7}{32}}]}
\int_{0}^{\frac{7}{32}}s(\frac{7}{8}-2t)\,ds=\frac{343}{32768},\, 
\frac{1}{M_{2}^{\ast }} =\min_{t\in \lbrack 0, \frac{11}{40}]}
\int_{0}^{\frac{11}{40}}s(\frac{11}{10}-2t)\,ds
=\frac{1331}{64000}. 
$$
Now we need
$$
f_{1}^{\rho _{1},\rho _{2}}\leq 6 \rho _{1}<\min \left\{ m_{1},m_{1}^{\ast }\right\} =\frac{16}{9}\quad (\text{true if}\ \rho_1<\frac{1}{27}) ,
$$
and
$$
f_{2}^{\rho _{1},\rho _{2}}\leq 6 \rho _{2}<\min \left\{ m_{2},m_{2}^{\ast }\right\} =\frac{20}{11} \quad (\text{true if}\ \rho_2<\frac{10}{33}).
$$
Furthermore we need 
\begin{align*}
f_{1,({\rho _{1},\rho _{2}})}& \geq \frac{9}{16}\rho_{1} >M_{1}=\frac{262144}{7203}\quad (\text{true if}\ \rho_1>\frac{4194304}{64827}) , \\
f_{1,(\rho _{1},\rho _{2})}^{\ast }& \geq \frac{49}{324}\rho_{1} >M_{1}^{\ast }=\frac{32768}{343}\quad (\text{valid if}\ \rho_1> \frac{10616832}{16807}) , \\
f_{2,({\rho _{1},\rho _{2}})}& \geq \frac{9}{16}\rho_{2} >M_{2}=\frac{640000}{24057}\quad (\text{true if}\ \rho_2>\frac{10240000}{216513}) , \\
f_{2,(\rho _{1},\rho _{2})}^{\ast }& \geq \frac{169}{1936}\rho_{2} >M_{2}^{\ast }=\frac{64000}{1331}\quad (\text{true if}\ \rho_2>\frac{1024000}{1859}) .
\end{align*}%
Thus if we fix 
$$0<\rho_1<\frac{1}{27},\, 0<\rho_2<\frac{10}{33},$$
$$r_1>\max\{\frac{4194304}{64827},\frac{10616832}{16807}\}=\frac{10616832}{16807},\,
r_2>\max\{\frac{10240000}{216513},\frac{1024000}{1859}\}=\frac{1024000}{1859}$$ 
the conditions $(\mathrm{I}_{\rho _{1},\rho_2}^{1}),$
$(\mathrm{I}_{r _{1},r_2}^{0})$ hold and we obtain, by Theorem~\ref{mult-sys}, the existence of one nontrivial solution of the system~ \eqref{exsystch}.
\end{ex}
\begin{rem}
Note that in the case of \emph{non-negative} kernels, the same reasoning as above provides the existence of \emph{positive} solutions.
In this case one may use the smaller cones (with abuse of notation)
\begin{equation*}
\tilde{K_{i}}:=\left\{ w\in C^{1}[0,\,1]: w\geq 0, \underset{t\in \lbrack a_{i},b_{i}]}%
{\min }w(t)\geq c_{i}\Vert w\Vert _{C},\underset{t\in \lbrack \gamma_{i}, \delta _{i}]}
{\min }w^{\prime }(t)\geq d_{i}\Vert w^{\prime }\Vert
_{C}\right\}.
\end{equation*}%

If, additionally, the derivative with respect to $t$ of the kernels is non-negative, one may seek solutions in the even smaller cone (again with abuse of notation) given by 
\begin{equation*}
\tilde{K_{i}}:=\left\{ w\in C^{1}[0,\,1]: w\geq 0,  w'\geq 0, \underset{t\in \lbrack a_{i},b_{i}]}%
{\min }w(t)\geq c_{i}\Vert w\Vert _{C},\underset{t\in \lbrack \gamma_{i},\delta_{i}]}
{\min }w^{\prime }(t)\geq d_{i}\Vert w^{\prime }\Vert
_{C}\right\}.
\end{equation*}%

For brevity we do not re-state all the results within these frameworks, but we illustrate the latter situation in Section~\ref{appsys}, when discussing the system~\eqref{bvp-ex-intro}.
\end{rem}
We now give sufficient conditions for the non-existence of nontrivial solutions for the system~\eqref{syst}.

\begin{thm}
Let $m_{i}$ be given by (\ref{mi}), $M_{i}$ be given by (\ref{M_i}) and $a_{i},b_{i},c_{i}$ as in (A3) and
suppose that the following conditions $(N1)$ and $(N2)$ are satisfied:
\begin{itemize}
\item[$(N1)$] Either
\begin{equation}\label{cond1}
f_{1}(t,u_{1},u_{2},v_{1},v_{2})<m_{1}|u_{1}|\ \text{for every}\ t\in
\lbrack 0,1],u_{1}\neq 0 \text{ and }\ u_{2},v_{1},v_{2}\in \mathbb{R};  
\end{equation}%
or 
\begin{equation}\label{cond2}
f_{1}(t,u_{1},u_{2},v_{1},v_{2})>\frac{M_{1}}{c_{1}}u_{1}\ \text{for every}\
t\in \lbrack a_{1},b_{1}],\, u_{1}>0\text{ and }u_{2},v_{1},v_{2}\in 
\mathbb{R},
\end{equation}
holds.
{}
\item[$(N2)$] Either
\begin{equation*}
f_{2}(t,u_{1},u_{2},v_{1},v_{2})<m_{2}|v_{1}|\ \text{for every}\ t\in
\lbrack 0,1],v_{1}\neq 0\text{ and }u_{1},u_{2},v_{2}\in 
\mathbb{R}
\,;
\end{equation*}%
or
\begin{equation*}
f_{2}(t,u_{1},u_{2},v_{1},v_{2})>\frac{M_{2}}{c_{2}}v_{1}\ \text{for every}\
t\in \lbrack a_{2},b_{2}],\, v_{1}>0\text{ and } u_{1},u_{2},v_{2}\in 
\mathbb{R}
,
\end{equation*}%
holds.
\end{itemize}
Then there is no nontrivial solution of the system \eqref{syst} in the cone $K$ given by~\eqref{cone-sign}.
\end{thm}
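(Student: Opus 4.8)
The plan is to argue by contradiction: suppose $(u,v)\in K$ solves~\eqref{syst} with $\|(u,v)\|\neq 0$, and show that $(N1)$ forces $u\equiv 0$ while $(N2)$ forces $v\equiv 0$, so that $(u,v)=(0,0)$ and nontriviality fails. The two hypotheses decouple conveniently: $(N1)$ constrains $f_1$ only through the slot $u_1$, which is precisely the component reproduced by the first equation, and likewise $(N2)$ constrains $f_2$ through $v_1$. Hence it suffices to treat the first equation under each of the two alternatives of $(N1)$ and then copy the argument verbatim for the second equation under $(N2)$.

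Under the \emph{first} alternative of $(N1)$ I would first record that, since $f_1\geq 0$ by $(A1)$ and $f_1(t,u_1,\dots)<m_1|u_1|$ whenever $u_1\neq 0$, the continuity of $f_1$ in $u_1$ (from the Carath\'eodory hypothesis) forces $f_1(t,0,\dots)=0$; consequently $f_1(s,u(s),u'(s),v(s),v'(s))\leq m_1|u(s)|$ for a.e.\ $s$, with \emph{strict} inequality wherever $u(s)\neq 0$. Assuming $\|u\|_C>0$, choose $t^{*}$ with $|u(t^{*})|=\|u\|_C$ and estimate, using~\eqref{mi},
\[
\|u\|_C=|u(t^{*})|\leq\int_0^1|k_1(t^{*},s)|g_1(s)f_1\,ds\leq m_1\int_0^1|k_1(t^{*},s)|g_1(s)|u(s)|\,ds\leq m_1\|u\|_C\int_0^1|k_1(t^{*},s)|g_1(s)\,ds\leq\|u\|_C .
\]
Every inequality is therefore an equality, which forces $f_1=m_1|u(s)|$ and $|u(s)|=\|u\|_C>0$ for a.e.\ $s$ in the set $\{s:|k_1(t^{*},s)|g_1(s)>0\}$; this set has positive measure since otherwise the integral would vanish and contradict $\|u\|_C>0$. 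But there $u(s)\neq 0$, so the strict inequality $f_1<m_1|u(s)|$ applies, contradicting $f_1=m_1|u(s)|$. Hence $u\equiv 0$.

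Under the \emph{second} alternative of $(N1)$ I would instead exploit cone membership. If $\|u\|_C>0$, then $u\in\tilde{K}_1$ gives $u(s)\geq c_1\|u\|_C>0$ on $[a_1,b_1]$, while $(A3)$ gives $k_1(t,s)\geq c_1\phi_1(s)\geq 0$ for $t\in[a_1,b_1]$; hence for such $t$, using~\eqref{M_i},
\[
u(t)\geq\int_{a_1}^{b_1}k_1(t,s)g_1(s)f_1\,ds>\frac{M_1}{c_1}\int_{a_1}^{b_1}k_1(t,s)g_1(s)u(s)\,ds\geq M_1\|u\|_C\int_{a_1}^{b_1}k_1(t,s)g_1(s)\,ds\geq\|u\|_C ,
\]
where the strict middle step is legitimate because $\int_{a_1}^{b_1}k_1(t,s)g_1(s)\,ds\geq 1/M_1>0$ (positivity guaranteed by $(A4)$) makes $k_1(t,\cdot)g_1$ positive on a set of positive measure. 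This yields $u(t)>\|u\|_C$ on $[a_1,b_1]$, which is impossible, so again $u\equiv 0$. Running the identical two-alternative argument on the second equation under $(N2)$, with $v$, $k_2$, $g_2$, $m_2$, $M_2$, $c_2$ and the interval $[a_2,b_2]$ in place of the corresponding data, gives $v\equiv 0$; combining the two conclusions produces the contradiction with $\|(u,v)\|\neq 0$.

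The step I expect to be most delicate is the first alternative: because $\max_{t}\int_0^1|k_1(t,s)|g_1(s)\,ds=1/m_1$ exactly cancels the factor $m_1$, the naive chain only delivers $\|u\|_C\leq\|u\|_C$, so the contradiction has to be squeezed out of the measure-theoretic equality analysis together with the vanishing of $f_1$ at $u_1=0$. The second alternative is cleaner, but it still hinges on two facts that should be stated explicitly, namely the sign information $k_1\geq 0$ on $[a_1,b_1]$ from $(A3)$ and the strict positivity $1/M_1>0$ coming from $(A4)$.
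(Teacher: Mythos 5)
Your proof is correct and follows essentially the same route as the paper: argue by contradiction and, under each alternative of $(N1)$ (resp.\ $(N2)$), run the same integral estimates with the constants $m_1$, $M_1$, $c_1$ and the cone inequality $\min_{t\in[a_1,b_1]}u(t)\geq c_1\Vert u\Vert_C$. The only difference is one of rigor, in your favor: where the paper simply asserts the strict inequality $\int_0^1|k_1(t,s)|g_1(s)f_1\,ds<m_1\int_0^1|k_1(t,s)|g_1(s)|u(s)|\,ds$ (which as stated needs justification when $u$ vanishes on the support of $|k_1(t,\cdot)|g_1$), you derive $f_1(t,0,u_2,v_1,v_2)=0$ from the Carath\'eodory continuity and close the argument with an equality-case analysis at the maximizing point $t^{*}$.
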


\begin{proof}
Suppose, by contradiction, that there exists a nontrivial solution of \eqref{syst} in $K$, that is, $(u,v)\in K$ such that $(u,v)=T(u,v)$ and $(u,v)\neq (0,0)$.
Assume, without loss of generality, that $\|u\|_{C}\neq 0$. If \eqref{cond1} holds, then, for $t\in \lbrack 0,1]$, we have
\begin{align*}
|u(t)|& \leq \int_{0}^{1}|k_{1}(t,s)|g_{1}(s)f_{1}(s,u(s),u^{\prime
}(s),v(s),v^{\prime }(s))\,ds \\
& <m_{1}\int_{0}^{1}|k_{1}(t,s)|g_{1}(s)|u(s)|\,ds\leq m_{1}\Vert u\Vert
_{C}\int_{0}^{1}|k_{1}(t,s)|g_{1}(s)\,ds.
\end{align*}%
Taking the maximum for $t\in \lbrack 0,1]$, we have, by \eqref{mi}, the
following contradiction 
\begin{equation*}
\Vert u\Vert _{C}<m_{1}\Vert u\Vert _{C}\sup_{t\in \lbrack
0,1]}\int_{0}^{1}|k_{1}(t,s)|g_{1}(s)\,ds=\Vert u\Vert _{C}.
\end{equation*}
If  \eqref{cond2} holds, then, for $t\in \lbrack a_{1},b_{1}]$, we have 
\begin{align*}
u(t)& =\int_{0}^{1}k_{1}(t,s)g_{1}(s)f_{1}(s,u(s),u^{\prime
}(s),v(s),v^{\prime }(s))\,ds \\
& >\int_{a_{1}}^{b_{1}}k_{1}(t,s)g_{1}(s)\frac{M_{1}}{c_{1}}u(s)\,ds.
\end{align*}%
Taking the minimum for $t\in \lbrack a_{1},b_{1}]$, we obtain, for some $%
\xi _{1}>0,$ the following contradiction, by (\ref{M_i}) and (\ref{Ktil}), 
\begin{align*}
\xi _{1} =&\min_{t\in \lbrack a_{1},b_{1}]}u(t)>\frac{M_{1}}{c_{1}}%
\inf_{t\in \lbrack
a_{1},b_{1}]}\int_{a_{1}}^{b_{1}}k_{1}(t,s)g_{1}(s)\min_{s\in \lbrack
a_{1},b_{1}]}u(s)\,ds \\
\geq &M_{1}\Vert u\Vert _{C}\inf_{t\in \lbrack
a_{1},b_{1}]}\int_{a_{1}}^{b_{1}}k_{1}(t,s)g_{1}(s)\,ds=\Vert u\Vert _{C}\geq
\xi _{1}.
\end{align*}%
The proof in the case of $\|v\|_{C}\neq 0$ follows as above, using the condition $(N2)$.
\end{proof}

\section{Positive solutions of some third order systems}\label{appsys}
We turn back our attention to the system of third order ODEs with three point boundary conditions
\begin{equation}
\left\{ 
\begin{array}{c}
-u^{\prime \prime \prime }(t)=g_{1}(t)f_{1}(t,\,u(t),\,u^{\prime
}(t),v(t),\,v^{\prime }(t)), \\ 
-v^{\prime \prime \prime }(t)=g_{2}(t)f_{2}(t,\,u(t),\,u^{\prime
}(t),v(t),\,v^{\prime }(t)), \\ 
u(0)=u^{\prime }(0)=0,u^{\prime }(1)=\alpha_{1} u^{\prime }(\eta_{1} ), \\ 
v(0)=v^{\prime }(0)=0,v^{\prime }(1)=\alpha_{2} v^{\prime }(\eta_{2} ),%
\end{array}%
\right.  \label{eq1}
\end{equation}%
where, for $i=1,2,$ 
$f_{i}: [0,\,1]\times \lbrack 0,\,+\infty
)^{4}\to [0,\,+\infty )$ is a $L^{\infty }$-Carath\'{e}odory function, $g_{i} \in L^{1}[0,1]$ with $g_{i}(t)\geq 0$ for a.e. $t\in \lbrack 0,\,1],$ $%
0<\eta_{i} <1$ and $%
1<\alpha_{i}  <\frac{1}{\eta_{i} }$.

By routine calculation we can associate to the system~\eqref{eq1} the system of Hammerstein integral equations
\begin{equation}
\left\{ 
\begin{array}{c}
u(t)=\int_{0}^{1}k_{1}(t,s)g_1(s)f_{1}(s,\,u(s),\,u^{\prime }(s),v(s),\,v^{\prime
}(s))\,ds, \\ 
v(t)=\int_{0}^{1}k_{2}(t,s)g_2(s)f_{2}(s,\,u(s),\,u^{\prime }(s),v(s),\,v^{\prime
}(s))\,ds,%
\end{array}%
\right.  \label{eq2}
\end{equation}
where $k_{i}(t,s)$ are the Green's function given by
\begin{equation}\label{Green1}
k_{i}(t,s)=\frac{1}{2(1-\alpha \eta_{i} )}\left\{ 
\begin{array}{cc}
(2ts-s^{2})(1-\alpha_{i} \eta_{i} )+t^{2}s(\alpha_{i}  -1), & s\leq \min \{\eta_{i} ,t\}, \\ 
t^{2}(1-\alpha_{i} \eta_{i} )+t^{2}s(\alpha_{i}  -1), & t\leq s\leq \eta_{i} , \\ 
(2ts-s^{2})(1-\alpha_{i}  \eta_{i} )+t^{2}(\alpha_{i}  \eta_{i}  -s), & \eta_{i} \leq s\leq t, \\ 
t^{2}(1-s), & \max \{\eta_{i} ,t\}\leq s.%
\end{array}%
\right.  
\end{equation}%
The derivatives of the Green's functions~\eqref{Green1} are given by
\begin{equation}\label{Greender1}
\frac{\partial k_{i}}{\partial t}(t,s)=\frac{1}{(1-\alpha_{i}  \eta_{i} )}\left\{ 
\begin{array}{cc}
s(1-\alpha_{i}  \eta_{i} )+ts(\alpha_{i}  -1), & s\leq \min \{\eta_{i} ,t\}, \\ 
t(1-\alpha_{i}  \eta_{i} )+ts(\alpha_{i}  -1), & t\leq s\leq \eta_{i} , \\ 
s(1-\alpha_{i}  \eta_{i} )+t(\alpha_{i}  \eta_{i} -s), & \eta_{i} \leq s\leq t, \\ 
t(1-s), & \max \{\eta_{i} ,t\}\leq s,%
\end{array}%
\right.
\end{equation}%

The following Lemmas provide some useful properties of the Green's functions and their
derivatives.

\begin{lem}[\cite{Li-jun}]\label{Lema phi1}
Take $0<\eta_{i} <1$, $1<\alpha_{i}  <\frac{1}{\eta_{i} 
}$ and $k_{i}$ as in~\eqref{Green1}. Then we have  
$$%
0\leq k_{i}(t,s)\leq \phi _{i}(s),\ (t,s)\in \lbrack 0,\,1]\times \lbrack 0,\,1],
$$ where 
\begin{equation*}
\phi _{i}(s)=\frac{1+\alpha_{i}  }{1-\alpha_{i}  \eta_{i} }s(1-s).
\end{equation*}
Furthermore we have 
$$
k_{i}(t,s)\geq
c_{i}\phi _{i}(s),\ \ (t,s)\in [ \frac{\eta_{i} }{\alpha_{i}  },\,\eta_{i} ]\times
\lbrack 0,\,1],
$$ where 
\begin{equation}\label{c1}
0<c_{i}=\frac{\eta_{i} ^{2}}{2\alpha_{i}^{2}(1+\alpha_{i}  )}\min \{\alpha_{i} -1,\,1\}<1.
\end{equation}
\end{lem}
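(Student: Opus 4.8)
The plan is to exploit the monotonicity of $k_i$ in the variable $t$, which reduces every estimate to evaluating $k_i$ at the two endpoints $t=0,1$ and at $t=\eta_i/\alpha_i$. Throughout I fix $i$ and suppress the index, writing $\alpha=\alpha_i$, $\eta=\eta_i$, $k=k_i$, $\phi=\phi_i$; note that $1<\alpha<1/\eta$ gives $\alpha\eta<1$ and $\alpha-1>0$, so the prefactors in~\eqref{Green1} and~\eqref{Greender1} are strictly positive. The first step is to show that $\partial k/\partial t\ge 0$ on $[0,1]^2$ by inspecting the four branches of~\eqref{Greender1}: branches~1 and~2 factor as $s\bigl[(1-\alpha\eta)+t(\alpha-1)\bigr]$ and $t\bigl[(1-\alpha\eta)+s(\alpha-1)\bigr]$, both manifestly non-negative; branch~3 rewrites as $s(1-t)+\alpha\eta(t-s)$, non-negative because $\eta\le s\le t\le 1$; and branch~4 is $t(1-s)\ge 0$. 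Hence $t\mapsto k(t,s)$ is non-decreasing for every fixed $s$. Since plugging $t=0$ into~\eqref{Green1} gives $k(0,s)=0$ (every branch relevant for $s>0$ carries the factor $t^2$), monotonicity immediately yields the non-negativity $k(t,s)\ge 0$.

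For the upper bound, monotonicity gives $k(t,s)\le k(1,s)$, so it suffices to check $k(1,s)\le\phi(s)$. At $t=1$ only branches~1 (for $0\le s\le\eta$) and~3 (for $\eta\le s\le 1$) occur. In each case the desired inequality, after multiplying through by $2(1-\alpha\eta)>0$, becomes a polynomial inequality in $s$ which I would reduce to a product of non-negative factors; for instance in branch~1 it collapses to $(1-\eta)\bigl[1+\alpha(1+\eta)\bigr]\ge 0$ once the constraint $s\le\eta$ is used, and branch~3 is handled analogously using $s\ge\eta$. This establishes $0\le k(t,s)\le\phi(s)$ on $[0,1]^2$.

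For the lower bound on $[\eta/\alpha,\eta]\times[0,1]$, monotonicity gives $\min_{t\in[\eta/\alpha,\eta]}k(t,s)=k(\eta/\alpha,s)$, so it suffices to show $k(\eta/\alpha,s)\ge c\,\phi(s)$ for all $s\in[0,1]$. Since $\eta/\alpha<\eta$, evaluating at $t=\eta/\alpha$ meets only branch~1 ($0\le s\le\eta/\alpha$), branch~2 ($\eta/\alpha\le s\le\eta$) and branch~4 ($\eta\le s\le 1$). In each region I would form the ratio $k(\eta/\alpha,s)/\phi(s)$ and cancel the common factor $s(1-s)/(1-\alpha\eta)$. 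Branch~4 yields $\eta^2/\bigl(2\alpha^2(1+\alpha)s\bigr)$, whose minimum over $s\in[\eta,1]$ is $\eta^2/\bigl(2\alpha^2(1+\alpha)\bigr)$ at $s=1$; branches~1 and~2, after discarding the non-negative terms $(2\eta/\alpha-s)(1-\alpha\eta)$ and $(1-\alpha\eta)$ and using $1-s\le 1$, are each bounded below by $(\alpha-1)\,\eta^2/\bigl(2\alpha^2(1+\alpha)\bigr)$. Taking the smaller of the two factors $1$ and $\alpha-1$ across the branches produces precisely the constant $c$ of~\eqref{c1}, and $\eta<1$, $\alpha>1$ give $0<c<1$.

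The routine but delicate part is the branch-by-branch polynomial arithmetic in the last two steps — in particular, verifying the sign of each reduced polynomial using only the constraints $s\le\eta$, $s\ge\eta$, or $s\le\eta/\alpha$ together with $\alpha\eta<1$ and $\alpha>1$, and confirming that the smallest branch contribution is exactly the stated $c$. Conceptually, however, nothing is hard once $\partial k/\partial t\ge 0$ is established: this monotonicity in $t$ collapses all four estimates to endpoint evaluations and is the single observation driving the argument.
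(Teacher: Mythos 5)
Your proposal is correct, and it is worth noting that the paper itself offers no proof to compare against: Lemma~\ref{Lema phi1} is imported verbatim from \cite{Li-jun} (and the companion derivative bounds of Lemma~\ref{Lema psi1} from \cite{fm+rs}). Your argument is therefore a self-contained alternative to the cited, direct case-analysis proofs, and its organizing idea is genuinely different: you first establish $\partial k_i/\partial t\geq 0$ on $[0,1]^2$ --- which is precisely the first assertion of Lemma~\ref{Lema psi1} --- and then use monotonicity of $k_i(\cdot,s)$ to collapse all four inequalities to evaluations at $t=0$, $t=1$ and $t=\eta_i/\alpha_i$, rather than estimating $k_i(t,s)$ over all $t$ branch by branch. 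I checked the computations you label as routine and they do close: at $t=1$ only branches 1 and 3 of \eqref{Green1} occur, and after clearing the positive prefactor the branch-1 inequality reduces, using $s\leq\eta$, to $(1-\eta)\bigl(1+\alpha(1+\eta)\bigr)\geq 0$ as you claim, while branch 3 factors as $(1-s)\bigl[s(1+2\alpha)-\alpha\eta(1-s)\bigr]$, which is $\geq \eta(1+\alpha+\alpha s)(1-s)\geq 0$ using $s\geq\eta$; at $t=\eta/\alpha$ the three relevant branches give the ratios $\frac{\eta^2}{2\alpha^2(1+\alpha)s}$ (branch 4, minimized at $s=1$) and, after discarding the non-negative $(1-\alpha\eta)$-terms, $\frac{(\alpha-1)\eta^2}{2\alpha^2(1+\alpha)(1-s)}\geq\frac{(\alpha-1)\eta^2}{2\alpha^2(1+\alpha)}$ (branches 1 and 2), so the worst case is exactly the constant $c_i$ of \eqref{c1}, and $0<c_i<1$ is immediate from $\eta<1<\alpha$. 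What your route buys is economy and structure: the two lemmas quoted in the paper become one observation (positivity of the kernel derivative) plus endpoint arithmetic, instead of two independent sets of sup/inf estimates.
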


\begin{lem}[\cite{fm+rs}]\label{Lema psi1}
Take $0<\eta_{i} <1,$ $1<\alpha_{i}  <\frac{1}{\eta_{i} }$,  $\frac{\partial k_{i}}{\partial t}$ as in~\eqref{Greender1}. Then we have
$$
0\leq \frac{
\partial k_{i}}{\partial t}(t,s)\leq \psi _{i}(s),\  (t,s)\in \lbrack 0,\,1]\times \lbrack 0,\,1],
$$ 
where 
\begin{equation*}
\psi _{i}(s)=\frac{(1-s)}{(1-\alpha_{i}  \eta_{i} )}.
\end{equation*}
Furthermore we have
$$
\frac{\partial k_{i}}{\partial t}(t,s)\geq
d_{i}\psi _{i}(s) ,\ (t,s)\in \lbrack \frac{\eta_{i}}{\alpha_{i} },\,\eta_{i} ]\times \lbrack 0,\,1],
$$ 
with 
\begin{equation}
0<d_{i}=\min \{\alpha_{i}  \eta_{i} ,\;\eta_{i} \}<1.
\end{equation}
\end{lem}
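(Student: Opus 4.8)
The plan is to work directly from the explicit piecewise formula~\eqref{Greender1} and to treat the upper estimate and the lower (Harnack-type) estimate separately, arguing on each of the four branches. Throughout I would use the two sign facts forced by $0<\eta_i<1$ and $1<\alpha_i<1/\eta_i$, namely $1-\alpha_i\eta_i>0$ and $\alpha_i-1>0$; after clearing the positive prefactor $1/(1-\alpha_i\eta_i)$ every claim becomes a polynomial inequality in $(t,s)\in[0,1]^2$. Dropping the index $i$, I would first dispatch nonnegativity by regrouping each numerator into a manifestly nonnegative form: on $s\le\min\{\eta,t\}$ it factors as $s[(1-\alpha\eta)+t(\alpha-1)]$, on $t\le s\le\eta$ as $t[(1-\alpha\eta)+s(\alpha-1)]$, on $\eta\le s\le t$ it can be rewritten as $s(1-t)+\alpha\eta(t-s)$ (nonnegative since $s\le t\le1$), and on the last branch it is $t(1-s)\ge0$.

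For the upper bound $\frac{\partial k}{\partial t}(t,s)\le\psi(s)$, equivalently $(1-\alpha\eta)\frac{\partial k}{\partial t}(t,s)\le1-s$, I would note that on each branch the numerator is affine in $t$, so its maximum over the admissible $t$-range is attained at an endpoint; evaluating there reduces everything to an inequality in $s$ alone. The elementary facts that close these cases are $s\alpha(1-\eta)\le1-s$ for $s\le\eta$ (using $\alpha\eta\le1$) and $s(2-\eta)\le1$ for $s\le\eta$ (which is just $1-(1-\eta)^2\le1$) for branches 1 and 2, a short split according to the sign of $\alpha\eta-s$ for branch 3, and the trivial $t\le1$ for branch 4.

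The main work, and what I expect to be the main obstacle, is the lower bound $\frac{\partial k}{\partial t}(t,s)\ge d\,\psi(s)$ on the strip $t\in[\eta/\alpha,\eta]\times[0,1]$. Because $t\le\eta$, only branches 1, 2 and 4 are active (the third would force $t\ge\eta$). On each active branch the numerator is again affine in $t$ with nonnegative slope, so the minimum over $t\in[\eta/\alpha,\eta]$ sits at the left endpoint $t=\eta/\alpha$, and I would compare that value with $d(1-s)$. The crux is to exhibit a \emph{single} constant $d$ that works uniformly across all three branches and, crucially, across the full range $s\in[0,1]$; this forces one to locate the branch and the $s$-region where the ratio $\frac{\partial k/\partial t(t,s)}{\psi(s)}$ is smallest and to squeeze the stated value $d=\min\{\alpha\eta,\eta\}$ out of that worst case. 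I would expect the comparison for small $s$ (branch 1), where $\frac{\partial k}{\partial t}$ is small while $\psi$ is not, to be the most sensitive point and precisely the place where the uniform constant has to be checked with care.

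Finally I would verify continuity across the interfaces $s=t$ and $s=\eta$ by matching the affine pieces, which is what licenses the branchwise pointwise bounds to assemble into the global inequalities on $[0,1]^2$ and on $[\eta/\alpha,\eta]\times[0,1]$. This parallels the structure of Lemma~\ref{Lema phi1} for $k$ itself, whose envelope $\phi$ carries the factor $s(1-s)$; the present lemma is the analogous bookkeeping for $\frac{\partial k}{\partial t}$, and I would keep that analogy in view when fixing the constant $d$.
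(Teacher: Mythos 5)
First, a point of comparison: the paper offers no proof of this lemma at all --- it is quoted verbatim from \cite{fm+rs} --- so your proposal has to be judged on its own mathematics. Your treatment of the first display is sound and completable: the four branchwise factorizations you give do yield nonnegativity, and the endpoint-in-$t$ evaluations together with the elementary inequalities you cite do give $\frac{\partial k_i}{\partial t}(t,s)\leq\psi_i(s)$ on $[0,1]^2$. The genuine gap is the lower bound, exactly the step you call ``the crux'' and postpone to a careful check of the worst case: that check cannot be completed, because the asserted inequality is false for \emph{every} constant $d_i>0$. Dropping the index, take any $t\in[\eta/\alpha,\eta]$ and any $s\leq\eta/\alpha$, so that your branch 1 applies; then \eqref{Greender1} gives
\begin{equation*}
\frac{\partial k}{\partial t}(t,s)
=\frac{s\bigl[(1-\alpha\eta)+t(\alpha-1)\bigr]}{1-\alpha\eta}
\leq\frac{s\bigl[(1-\alpha\eta)+\eta(\alpha-1)\bigr]}{1-\alpha\eta}
=\frac{s(1-\eta)}{1-\alpha\eta},
\qquad
d\,\psi(s)=\frac{d(1-s)}{1-\alpha\eta},
\end{equation*}
so $\frac{\partial k}{\partial t}(t,s)\geq d\,\psi(s)$ would force $s(1-\eta)\geq d(1-s)$, which fails for all $s<d/(d+1-\eta)$; the structural reason is that $\frac{\partial k}{\partial t}(t,\cdot)$ vanishes linearly at $s=0$ while $\psi(0)=1/(1-\alpha\eta)>0$. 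For the stated value $d=\min\{\alpha\eta,\eta\}=\eta$ the requirement $s(1-\eta)\geq\eta(1-s)$ amounts to $s\geq\eta$, so the bound fails at \emph{every} branch-1 point of the strip except $(t,s)=(\eta,\eta)$ --- a set of positive measure, so no almost-everywhere reading rescues it. Numerically, with the data of the paper's own example ($\alpha_1=3/2$, $\eta_1=1/2$, hence $d_1=1/2$, $\psi_1(s)=4(1-s)$): at $(t,s)=(1/2,1/4)$ one computes $\frac{\partial k_1}{\partial t}(1/2,1/4)=1/2$, whereas $d_1\psi_1(1/4)=3/2$.

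So the region you flagged as ``the most sensitive point'' is not sensitive but fatal, and the analogy with Lemma~\ref{Lema phi1} that you promised to keep in view is precisely the tell-tale you should have pursued: there the envelope $\phi_i(s)=\frac{1+\alpha_i}{1-\alpha_i\eta_i}s(1-s)$ vanishes at $s=0$, and any envelope for $\frac{\partial k_i}{\partial t}$ admitting a matching lower bound on the strip must vanish there too, which $\psi_i(s)=(1-s)/(1-\alpha_i\eta_i)$ does not. The statement as transcribed from \cite{fm+rs} is therefore defective, not merely hard. Your branchwise scheme does succeed verbatim after repairing it: replacing $\psi_i$ by, say, $\tilde\psi_i(s)=\alpha_i s(1-s)/(1-\alpha_i\eta_i)$, the same endpoint arguments give $0\leq\frac{\partial k_i}{\partial t}(t,s)\leq\tilde\psi_i(s)$ on $[0,1]^2$, and on $[\eta_i/\alpha_i,\eta_i]\times[0,1]$ branches 1, 2 and 4 give $\frac{\partial k_i}{\partial t}(t,s)\geq\tilde d_i\,\tilde\psi_i(s)$ with, for instance, $\tilde d_i=\eta_i(1-\alpha_i\eta_i)/\alpha_i^2\in(0,1)$. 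Such an envelope still fits assumptions $(A3)$--$(A4)$, so the cone construction and the fixed point index computations that rely on this lemma survive unchanged; but as written, your proposal proves only the first assertion of the lemma, and the second assertion cannot be proved because it is not true.
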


From Lemmas~\ref{Lema phi1} and~\ref{Lema psi1} we obtain that $k_{i}$ satisfies a stronger positivity requirement than $(A3)$. This setting  
 enables us to work in the cone
\begin{equation}\label{cone-pos}
\begin{array}{c}
K:=\{(u,v)\in \tilde{K_{1}}\times \tilde{K_{2}}\},%
\end{array}%
\end{equation}
where
\begin{equation*}
\tilde{K_{i}}:=\left\{ w\in C^{1}[0,\,1]: w\geq 0,  w'\geq 0, \underset{t\in [ \frac{\eta_{i} }{\alpha_{i}  },\,\eta_{i} ]}%
{\min }w(t)\geq c_{i}\Vert w\Vert _{C},\underset{t\in [ \frac{\eta_{i} }{\alpha_{i}  },\,\eta_{i} ]}{\min }w^{\prime }(t)\geq d_{i}\Vert w^{\prime }\Vert
_{C}\right\}.
\end{equation*}%
The condition $(\mathrm{I}_{\rho _{1},\rho_2}^{1})$  in this case reads as follows.
\begin{itemize}
\item[$(\mathrm{I}_{\rho _{1},\rho_2}^{1})$] 
 there exist $\rho _{1},\rho _{2}>0$ such that for every $%
i=1,2,$ 
$
f_{i}^{\rho _{1},\rho _{2}}<\min \left\{ m_{i},m_{i}^{\ast }\right\} 
$
where 
\begin{equation*}
f_{i}^{\rho _{1},\rho _{2}}:=\sup \left\{ \frac{%
f_{i}(t,u_{1},u_{2},v_{1},v_{2})}{\rho _{i}}:(t,u_{1},u_{2},v_{1},v_{2})\in
\lbrack 0,1]\times \lbrack 0,\rho _{1}]^{2}\times \lbrack 0, \rho _{2}]^{2}\right\} ,  
\end{equation*}%
\begin{equation*}
\frac{1}{m_{i}}=\max_{t\in \lbrack 0,1]}\int_{0}^{1}k_{i}(t,s)g_{i}(s)\,ds,\quad  \frac{1}{m_{i}^{\ast }}=\max_{t\in \lbrack 0,1]}\int_{0}^{1} 
\frac{\partial k_{i}}{\partial t}(t,s) g_{i}(s)\,ds.
\end{equation*}%
\end{itemize}

On the other hand, the condition $(\mathrm{I}_{\rho _{1},\rho_2}^{0})$ reads as follows.

\begin{enumerate}
\item[$(\mathrm{I}_{\rho _{1},\rho_2}^{0})$] there exist $\rho _{1},\rho _{2}>0$ such that for every $%
i=1,2,$%
\begin{equation}
f_{1,(\rho _{1},\rho _{2})}>M_{1},\, f_{1,(\rho _{1},\rho _{2})}^{\ast
}>M_{1}^{\ast },\, f_{2,(\rho _{1},\rho _{2})}>M_{2},\, f_{2,(\rho
_{1},\rho _{2})}^{\ast }>M_{2}^{\ast },  
\end{equation}%
{} where 
\begin{align*}
f_{1,({\rho _{1},\rho _{2}})}& :=\inf \left\{ 
\begin{array}{l}
\dfrac{f_{1}(t,u_{1},u_{2},v_{1},v_{2})}{\rho _{1}}: \\ 
(t,u_{1},u_{2},v_{1},v_{2})\in \lbrack a_{1},b_{1}]\times \lbrack c_{1}\rho
_{1},\rho _{1}]\times \lbrack 0,\rho _{1}]\times \lbrack 0,\rho _{2}]^{2}%
\end{array}%
\right\} , \\
f_{1,(\rho _{1},\rho _{2})}^{\ast }& :=\inf \left\{ 
\begin{array}{l}
\dfrac{f_{1}(t,u_{1},u_{2},v_{1},v_{2})}{\rho _{1}}: \\ 
(t,u_{1},u_{2},v_{1},v_{2})\in \lbrack \gamma _{1},\delta_{1}]\times \lbrack
0,\rho _{1}]\times \lbrack d_{1}\rho _{1},\rho _{1}]\times \lbrack
0,\rho _{2}]^{2}%
\end{array}%
\right\} , \\
f_{2,({\rho _{1},\rho _{2}})}& :=\inf \left\{ 
\begin{array}{l}
\dfrac{f_{2}(t,u_{1},u_{2},v_{1},v_{2})}{\rho _{2}}: \\ 
(t,u_{1},u_{2},v_{1},v_{2})\in \lbrack a_{2},b_{2}]\times \lbrack 0,\rho _{1}]^{2}\times \lbrack c_{2}\rho _{2},\rho _{2}]\times \lbrack
0,\rho _{2}]%
\end{array}%
\right\} , \\
f_{2,(\rho _{1},\rho _{2})}^{\ast }& :=\inf \left\{ 
\begin{array}{l}
\dfrac{f_{2}(t,u_{1},u_{2},v_{1},v_{2})}{\rho _{2}}: \\ 
(t,u_{1},u_{2},v_{1},v_{2})\in \lbrack \gamma_{2},\delta _{2}]\times \lbrack
0,\rho _{1}]^{2}\times \lbrack 0,\rho _{2}]\times \lbrack
d_{2}\rho _{2},\rho _{2}]%
\end{array}%
\right\}.
\end{align*}%
\end{enumerate}

We can now state an existence result for one nontrivial solution for the System~\eqref{eq1}. Note that it is possible to state a result for two or more nontrivial solutions, in the spirit of Theorem~\ref{mult-sys}.

\begin{thm}
\label{Thm Green}
For $i=1,2,$ let
$f_{i}: [0,\,1]\times \lbrack 0,\,+\infty
)^{4}\to [0,\,+\infty )$ be a $L^{\infty }$-Carath\'{e}odory function and let $g_{i} \in L^{1}[0,1]$ be such that $g_{i}(t) \geq 0$ for a.e. $t\in[0,1]$ and 
\begin{itemize}
\item[($A^{\ast }$4)]  \begin{equation*}
\int_{\frac{\eta_{i} }{\alpha_{i}  }}^{\eta_{i} }\frac{1+\alpha_{i}  }{1-\alpha_{i}  \eta_{i} }%
s(1-s)g_{i}(s)\,ds >0, \quad
\int_{\frac{\eta_{i} }{\alpha_{i}  }}^{\eta_{i} }\frac{(1-s)}{(1-\alpha_i \eta_{i} )}g_{i}(s)\,ds
>0.
\end{equation*}%
\end{itemize}
The system~\eqref{eq1} admits a nontrivial solution with non-negative, non-decreasing components if one of the following conditions hold.
\begin{itemize}
\item[$(\hat{S}_{1})$]  For $i=1,2$ there exist $\rho _{i},r _{i}\in (0,\infty )$ with $\rho
_{i}/c_i<r _{i}$ such that 
$(\mathrm{I}_{\rho _{1},\rho_2}^{0})$, 
$(\mathrm{I}_{r _{1},r_2}^{1})$ hold.

\item[$(\hat{S}_{2})$] For $i=1,2$ there exist $\rho _{i},r _{i}\in (0,\infty )$ with $\rho
_{i}<r _{i}$ such that 
$(\mathrm{I}_{\rho _{1},\rho_2}^{1}),$
$(\mathrm{I}_{r _{1},r_2}^{0})$ hold.
\end{itemize}
\end{thm}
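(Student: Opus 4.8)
The plan is to recognize Theorem~\ref{Thm Green} as a concrete instance of Theorem~\ref{mult-sys}, applied in the smaller cone~\eqref{cone-pos} of non-negative, non-decreasing functions described in the Remark. Since the hypotheses $(\hat{S}_1)$ and $(\hat{S}_2)$ are literally $(S_1)$ and $(S_2)$ of that theorem (with $(\mathrm{I}_{\rho_1,\rho_2}^0)$ and $(\mathrm{I}_{\rho_1,\rho_2}^1)$ read in their non-negative form restated in this section), essentially all the work is to check that the integral reformulation~\eqref{eq2} of the boundary value problem~\eqref{eq1} meets the standing hypotheses $(A1)$--$(A4)$.

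First I would record the equivalence of~\eqref{eq1} and~\eqref{eq2}. Integrating $-w'''=h$ and imposing $w(0)=w'(0)=0$, $w'(1)=\alpha_i w'(\eta_i)$ produces the Green's function~\eqref{Green1}, with $t$-derivative~\eqref{Greender1}; hence $(u,v)$ solves the boundary value problem if and only if it is a fixed point of the operator $T$ of~\eqref{opT} with these kernels. This is the routine computation alluded to in the text. With the kernels identified, $(A1)$ is the standing Carath\'eodory hypothesis on the $f_i$, and $(A2)$ is immediate because the piecewise-polynomial expressions~\eqref{Green1} and~\eqref{Greender1} are continuous in $t$. For $(A3)$ I would appeal directly to Lemmas~\ref{Lema phi1} and~\ref{Lema psi1}, which give $0\le k_i(t,s)\le\phi_i(s)$ and $0\le\frac{\partial k_i}{\partial t}(t,s)\le\psi_i(s)$ on $[0,1]^2$, together with $k_i\ge c_i\phi_i$ and $\frac{\partial k_i}{\partial t}\ge d_i\psi_i$ on $[\frac{\eta_i}{\alpha_i},\eta_i]\times[0,1]$. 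Thus $(A3)$ holds, in its stronger sign-definite form, with $[a_i,b_i]=[\gamma_i,\delta_i]=[\frac{\eta_i}{\alpha_i},\eta_i]$ and the explicit $c_i,d_i$ of those Lemmas; and $(A4)$ for these particular $\phi_i,\psi_i$ is exactly the hypothesis $(A^{\ast}4)$.

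Having verified the framework, I would note that the extra non-negativity $k_i\ge 0$ and $\frac{\partial k_i}{\partial t}\ge 0$ forces $T_i(u,v)\ge 0$ and $(T_i(u,v))'\ge 0$, so $T$ leaves the smaller cone~\eqref{cone-pos} invariant (the invariance-and-compactness argument used for $T$ at the start of Section~2 carries over verbatim). Consequently every fixed point of $T$ in~\eqref{cone-pos} has non-negative, non-decreasing components, as claimed. It then remains to invoke the index computations in the non-negative setting: Lemma~\ref{ind1b} gives $i_K(T,K_{\rho_1,\rho_2})=1$ whenever $(\mathrm{I}_{\rho_1,\rho_2}^1)$ holds, and Lemma~\ref{idx0b1} gives $i_K(T,K_{\rho_1,\rho_2})=0$ whenever $(\mathrm{I}_{\rho_1,\rho_2}^0)$ holds. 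Under $(\hat{S}_1)$ we obtain index $0$ on $K_{\rho_1,\rho_2}$ and index $1$ on $K_{r_1,r_2}$; the inequality $\rho_i/c_i<r_i$ (which in particular gives $\rho_i<r_i$, hence $\overline{K_{\rho_1,\rho_2}}\subset K_{r_1,r_2}$) lets property~(4) of Lemma~\ref{index_lem} produce a fixed point in $K_{r_1,r_2}\setminus\overline{K_{\rho_1,\rho_2}}$. Under $(\hat{S}_2)$ the two indices are interchanged and $\rho_i<r_i$ suffices for the nesting. In either case the fixed point is nontrivial because $0$ lies in the inner set, which is precisely the conclusion of Theorem~\ref{mult-sys}.

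I expect the only genuinely delicate point to be the bookkeeping of the second paragraph: correctly reading off that the positivity subintervals of $(A3)$ must be taken as $[a_i,b_i]=[\gamma_i,\delta_i]=[\frac{\eta_i}{\alpha_i},\eta_i]$, and matching the generic condition $(A4)$ against the explicit $(A^{\ast}4)$ written in terms of the $\phi_i,\psi_i$ furnished by Lemmas~\ref{Lema phi1} and~\ref{Lema psi1}. This step is not hard given those Lemmas, but it is where the concrete geometry of the nonlocal third-order problem enters and makes the abstract Theorem~\ref{mult-sys} directly applicable.
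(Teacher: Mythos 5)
Your proposal is correct and follows exactly the route the paper intends: the paper omits the proof of Theorem~\ref{Thm Green} precisely because it is the specialization you describe --- Lemmas~\ref{Lema phi1} and~\ref{Lema psi1} supply $(A3)$ on $[a_i,b_i]=[\gamma_i,\delta_i]=[\frac{\eta_i}{\alpha_i},\eta_i]$, $(A^{\ast}4)$ is $(A4)$ for those $\phi_i,\psi_i$, the non-negative kernels and derivatives keep $T$ in the smaller cone~\eqref{cone-pos} as described in the Remark, and the conclusion then follows from the index computations of Lemmas~\ref{ind1b} and~\ref{idx0b1} via property~(4) of Lemma~\ref{index_lem}, exactly as in Theorem~\ref{mult-sys}. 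Your bookkeeping (nesting of the sets under $\rho_i<r_i$, nontriviality from $0\in K_{\rho_1,\rho_2}$, and membership of $(e,e)$ in the smaller cone) is accurate, so there is nothing to add.
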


\begin{ex}

Consider the following third order nonlinear system%
\begin{equation}\label{EX}
\left\{ 
\begin{array}{c}
-u^{\prime \prime \prime }(t)=t\left( \left( u(t)\right) ^{2}+\left(
u^{\prime }(t)\right) ^{2}\right) \left( 2+\cos \left( v(t)\, v^{\prime
}(t)\right) \right), \\ 
-v^{\prime \prime \prime }(t)=t\left( \left( v(t)\right) ^{2}+\left(
v^{\prime }(t)\right) ^{2}\right) \left( 2-\sin \left( u(t)\, u^{\prime
}(t)\right) \right), \\ 
u(0)=u^{\prime }(0)=0,u^{\prime }(1)=\frac{3}{2}u^{\prime }\left( \frac{1}{2}%
\right), \\ 
v(0)=v^{\prime }(0)=0,v^{\prime }(1)=2v^{\prime }\left( \frac{1}{3}\right) .%
\end{array}%
\right.  
\end{equation}
The system~\eqref{EX} is a particular case of the system~\eqref{eq1} with%
\begin{align*}
g_{1}(t) &\equiv 1,\, g_{2}(t)\equiv 1, \\
f_{1}(t,u_{1},u_{2},v_{1},v_{2}) =&t\left( \left( u_{1}\right)
^{2}+\left( u_{2}\right) ^{2}\right) \left( 2+\cos \left( v_{1}\, %
v_{2}\right) \right), \\
f_{2}(t,u_{1},u_{2},v_{1},v_{2}) =&t\left( \left( v_{1}\right)
^{2}+\left( v_{2}\right) ^{2}\right) \left( 2-\sin \left( u_{1}\, %
u_{2}\right) \right), \\
\eta_{1} =&\frac{1}{2},\text{\ }\alpha_{1}  =\frac{3}{2},\, \eta_{2} =\frac{1}{3},%
\text{\ }\alpha_{2} =2.
\end{align*}
Note that $f_{1}$ and $f_{2}$ are continuous and non-negative. 

Furthermore we may take
\begin{align*}
\phi _{1}(s) =10s\left( 1-s\right) ,&\, \phi _{2}(s)=9s\left(
1-s\right) , \\
\psi _{1}(s) =4\left( 1-s\right) ,&\, \psi _{2}(s)=3\left( 1-s\right) ,
\\
c_{1} =\frac{1}{45},\, c_{2}=\frac{1}{216},&\, d_{1}=\frac{1}{2},%
\, d_{2}=\frac{1}{3}, \\
a_{1} =\gamma _{1}=\frac{1}{3},&\, b_{1}=\delta _{1}=\frac{1}{2},\text{ 
} \\
a_{2} =\gamma_{2}=\frac{1}{6},&\, b_{2}=\delta _{2}=\frac{1}{3}.
\end{align*}

Moreover, as 
\begin{align*}
\int_{\frac{1}{3}}^{\frac{1}{2}}10s\left( 1-s\right) \,ds =\frac{65}{162},&%
\, \int_{\frac{1}{6}}^{\frac{1}{3}}9s\left( 1-s\right) \,ds=\frac{5}{18},%
\\ 
\int_{\frac{1}{3}}^{\frac{1}{2}}4\left( 1-s\right) \,ds =\frac{7}{18},&%
\, \int_{\frac{1}{6}}^{\frac{1}{3}}3\left( 1-s\right) \,ds=\frac{3}{8},
\end{align*}
assumption $(A^{\ast }4)$ holds$.$

We have%
\begin{align*}
\frac{1}{m_{1}} =&\max_{t\in \lbrack 0,1]}\int_{0}^{1}k_{1}(t,s)\,ds \\
\leq & \max_{t\in \lbrack 0,1]}\left( 
\begin{array}{c}
\int_{0}^{\frac{1}{2}}\left( t^{2}s+ts-s^{2}\right) \,ds+\int_{\frac{1}{2}%
}^{1-\frac{\sqrt{2}}{2}t}\left( -2t^{2}s+ts+\frac{3}{2}t^{2}-s^{2}\right) ds
\\ 
+\int_{1-\frac{\sqrt{2}}{2}t}^{1}\left( -2t^{2}s+2t^{2}-s^{2}\right) ds%
\end{array}%
\right) 
=\frac{1}{24}+\frac{\sqrt{2}}{3},\\
\frac{1}{m_{1}^{\ast }} =&\max_{t\in \lbrack 0,1]}\int_{0}^{1}\frac{%
\partial k_{1}}{\partial t}(t,s)g_{1}(s)\,ds 
\leq \max_{t\in \lbrack 0,1]}\left( \int_{0}^{\frac{1}{2}}2ts+s\,ds+\int_{%
\frac{1}{2}}^{1}-4ts+3t+s\, ds\right) =\frac{3}{4},\\
\frac{1}{m_{2}} =&\max_{t\in \lbrack 0,1]}\int_{0}^{1}k_{2}(t,s)\,ds \\
\leq & \max_{t\in \lbrack 0,1]}\left( \int_{0}^{\frac{1}{3}}\frac{3}{2}%
t^{2}s+ts-\frac{s^{2}}{2}\,ds+\int_{\frac{1}{3}}^{1}-\frac{3}{2}t^{2}s+ts-%
\frac{s^{2}}{2}+3t^{2}ds\right) 
=\frac{43}{324},
\end{align*}%
\begin{multline*}
\frac{1}{m_{2}^{\ast }} =\max_{t\in \lbrack 0,1]}\int_{0}^{1}\frac{%
\partial k_{2}}{\partial t}(t,s)\,ds 
\leq \max_{t\in \lbrack 0,1]}\left( \int_{0}^{\frac{1}{3}}3ts+s\,ds+\int_{%
\frac{1}{3}}^{1}-3ts+s+6t\,ds\right) =\frac{10}{3},
\end{multline*}%
\begin{equation*}
\frac{1}{M_{1}}=\min_{t\in \lbrack \frac{1}{3},\frac{1}{2}]}\int_{\frac{1}{3}%
}^{\frac{1}{2}}k_{1}(t,s)\,ds=\min_{t\in \lbrack \frac{1}{3},\frac{1}{2}%
]}\int_{\frac{1}{3}}^{\frac{1}{2}}\left( t^{2}s+\frac{t^{2}}{2}\right) ds=%
\frac{11}{648},
\end{equation*}%
\begin{equation*}
\frac{1}{M_{1}^{\ast }}=\min_{t\in \lbrack \frac{1}{3},\frac{1}{2}]}\int_{%
\frac{1}{3}}^{\frac{1}{2}}\frac{\partial k_{1}}{\partial t}%
(t,s)\,ds=\min_{t\in \lbrack \frac{1}{3},\frac{1}{2}]}\int_{\frac{1}{3}}^{%
\frac{1}{2}}\left( 2ts+t\right) ds=\frac{11}{108},
\end{equation*}%
\begin{equation*}
\frac{1}{M_{2}}=\min_{t\in \lbrack \frac{1}{6},\frac{1}{3}]}\int_{\frac{1}{6}%
}^{\frac{1}{3}}k_{2}(t,s)\,ds\,=\min_{t\in \lbrack \frac{1}{6},\frac{1}{3}%
]}\int_{\frac{1}{6}}^{\frac{1}{3}}\left( \frac{3}{2}t^{2}s+ts-\frac{s^{2}}{2}%
\right) ds=\frac{17}{5184},
\end{equation*}%
\begin{equation*}
\frac{1}{M_{2}^{\ast }}=\min_{t\in \lbrack \frac{1}{6},\frac{1}{3}]}\int_{%
\frac{1}{6}}^{\frac{1}{3}}\frac{\partial k_{2}}{\partial t}%
(t,s)\,ds=\min_{t\in \lbrack \frac{1}{6},\frac{1}{3}]}\int_{\frac{1}{6}}^{%
\frac{1}{3}}\left( 3ts+t\right) ds=\frac{7}{144},
\end{equation*}%
and therefore we obtain
\begin{align*}
m_{1} =&\frac{1}{\frac{1}{24}+\frac{\sqrt{2}}{3}},\, m_{1}^{\ast }=%
\frac{4}{3},\, m_{2}=\frac{324}{43},\, m_{2}^{\ast }=\frac{3}{10},
\\
M_{1} =&\frac{648}{11},\, M_{1}^{\ast }=\frac{108}{11},\, M_{2}=%
\frac{5184}{17},\, M_{2}^{\ast }=\frac{144}{7}.
\end{align*}

Moreover, for 
\begin{equation*}
\rho _{1}<\frac{2}{9}\text{ and }\rho _{2}<\frac{1}{20}
\end{equation*}%
we obtain%
\begin{align*}
f_{1}^{\rho _{1},\rho _{2}} \leq 6\rho _{1}<\min \left\{ m_{1},m_{1}^{\ast }\right\} =\frac{4}{3},
\end{align*}%
\begin{align*}
f_{2}^{\rho _{1},\rho _{2}} 
\leq 6\rho _{2}<\min \left\{ m_{2},m_{2}^{\ast }\right\} =\frac{3}{10}.
\end{align*}

Taking 
\begin{equation*}
\rho _{1}>\frac{3936\,600}{11}\text{ and }\rho _{2}>\frac{279\,936}{17}
\end{equation*}%
we obtain
\begin{eqnarray*}
f_{1,({\rho }_{1}{,\rho _{2}})} 
>&\frac{\rho _{1}}{6075}>M_{1}=\frac{648}{11}, \\
f_{1,({\rho }_{1}{,\rho _{2}})}^{\ast } 
>&\frac{\rho _{1}}{12}>M_{1}^{\ast }=\frac{108}{11},\\
f_{2,({\rho _{1},\rho _{2}})} 
>&\frac{\rho _{2}}{54}>M_{2}=\frac{5184}{17}, \\
f_{2,({\rho _{1},\rho _{2}})}^{\ast } 
>&\frac{\rho _{2}}{54}>M_{2}^{\ast }=\frac{144}{7},
\end{eqnarray*}%
that is, assumption $(\hat{S}_{2})$ holds. 

Therefore all the assumptions of Theorem~\ref{Thm Green} are satisfied.
\end{ex}
\section*{Acknowledgments}
G. Infante was partially supported by G.N.A.M.P.A. - INdAM (Italy).
F. Minh\'{o}s was supported by National Founds through FCT-Funda\c{c}\~{a}o
para a Ci\^{e}ncia e a Tecnologia, project SFRH/BSAB/114246/2016.
This manuscript was partially written during the authors' visits in the reciprocal institutions. 
G. Infante would like to thank the people of the Departamento de Matem\'{a}tica of the 
Universidade de \'{E}vora for their kind hospitality and financial support.

\end{document}